\documentclass[11pt]{amsart}

\usepackage{amsmath, amssymb, amsthm}
\usepackage{graphicx} 
\usepackage{hyperref}
\usepackage[svgnames]{xcolor}

\newtheorem{thm}{Theorem}[section]
\newtheorem{prop}[thm]{Proposition}
\newtheorem{lemma}[thm]{Lemma}

\theoremstyle{definition}
\newtheorem{defn}[thm]{Definition}

\theoremstyle{remark}

\newtheorem{rmk}[thm]{Remark}

\newcommand{\mE}{\mathbb{E}}

\newcommand{\mR}{\mathbb{R}}

\newcommand{\mS}{\mathbb{S}}
\newcommand{\mZ}{\mathbb{Z}}

\newcommand{\e}{\varepsilon}

\newcommand{\len}{\mathrm{len}}
\newcommand{\Lk}{\mathrm{Lk}}
\renewcommand{\H}{\mathrm{H}}
\newcommand{\UT}{\mathrm{T}^1}

\renewcommand{\H}{\mathrm{H}}

\newcommand{\ds}{\displaystyle}

\setcounter{tocdepth}{1}

\begin{document}

\title{Left-handed geodesic flow of spheres of revolution}

\author[Dehornoy]{Pierre Dehornoy}
\address{P. Dehornoy, Aix-Marseille Université, CNRS, I2M, Marseille, France,}
\email{pierre.dehornoy@univ-amu.fr}
\urladdr{https://www.i2m.univ-amu.fr/perso/pierre.dehornoy/}

\author[Rechtman]{Ana Rechtman}
\address{A. Rechtman, Institut Fourier and Institut Universitaire de France (IUF),
Universit\'e Grenoble Alpes,
100 rue des math\'ematiques,
38610, Gi\`eres, France\\
}
\email{ana.rechtman@univ-grenoble-alpes.fr}
\urladdr{https://www-fourier.ujf-grenoble.fr/~rechtmaa/}

\begin{abstract}
A flow on a 3-manifold is left-handed if any two ergodic invariant measures have negative linking number. 
We prove that on a 2-sphere of revolution whose curvature is $1/4$-pinched the geodesic flow is left-handed. 
Conversely, for every $\delta\le 1/4$, we construct a 2-sphere whose curvature is $\delta$-pinched and whose geodesic flow is not left-handed. This gives credit to a conjecture of Florio and Hryniewicz that $1/4$ is the optimal pinching constant for left-handedness among arbitrary positively curved 2-spheres. 
\end{abstract}

\maketitle

\tableofcontents

\date{}

\section{Introduction}

The aim of this note is to study the condition of left-handedness for geodesic flows of spheres of revolution, or in other words genus zero surfaces of revolution. 
Left-handedness relies on the notion of linking of pairs of invariant measures, and corresponds to constant sign of these linking numbers.
Since the flows we consider here are integrable, they are left-handed if, roughly speaking, every pair of periodic orbits has negative linking number.

Left-handedness, introduced by É.~Ghys \cite{Ghys}, implies in particular that every finite collection of periodic orbits bounds a Birkhoff section.
We proved that, in negative curvature, left-handedness of the geodesic flows occurs for triangular orbifolds only~\cite{D}. 
Here, we relate left-handedness to the pinching of the curvature: the quotient $\delta(S)=\frac{K_{min}}{K_{max}}\leq 1$ of the extreme values of the Gaussian curvature of the surface $S$. 
We say that the curvature is $\delta$-pinched. 
A.~Florio and U.~Hryniewicz proved that, in general, if the Gaussian curvature of a 2-sphere satisfies that $\delta>0.7225$, then the geodesic flow on its unit tangent bundle is left-handed~\cite{FH}\footnote{Florio and Hryniewicz adopt a convention different than ours for linking numbers of lifts of geodesics to the unitary tangent bundle, which leads to a change of sign. Hence their flows are right-handed when ours are left-handed.
}.
They conjectured that the good bound for $\delta$ should be $1/4$. 
Our main result is that this is the optimal bound among spheres of revolution.

\begin{thm}\label{thm-revolution}
Let $S$ be a sphere of revolution in~$\mR^3$ with the induced metric. 
If $\delta(S)>1/4$, the geodesic flow on~$\UT S$ is left-handed. 

Conversely, for every $\delta\le 1/4$, there exists a $\delta$-pinched sphere of revolution in~$\mR^3$ whose geodesic flow is not left-handed (nor right-handed). 
\end{thm}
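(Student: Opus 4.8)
The plan is to exploit Clairaut integrability. Parametrising $S$ by arclength $s\in[0,L]$ along a meridian and rotation angle $\theta$, the metric is $ds^2+f(s)^2\,d\theta^2$, with $f(0)=f(L)=0$, $f'(0)=1=-f'(L)$, Gaussian curvature $K=-f''/f$, and equator the maximal parallel $s=s_0$, $f(s_0)=a=\max f$, $f'(s_0)=0$. Along a geodesic $c=f\sin\psi$ (with $\psi$ the angle to the meridian) is conserved, so $\UT S$ is foliated by the invariant tori $\{c=\mathrm{const}\}$, $c\in(-a,a)$, the equator being $c=\pm a$ and the meridians $c=0$. On each torus the flow is linear, and the key invariant is the rotation function
\[
\rho(c)=\frac1\pi\int_{s_1(c)}^{s_2(c)}\frac{c\,ds}{f\sqrt{f^2-c^2}},
\]
the $\theta$-advance (in units of $\pi$) between consecutive turning points $f(s_{1,2})=|c|$. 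A direct expansion gives the boundary values $\rho(0^+)=1$ (a near-meridian swings by $\pi/2$ around each pole) and, from $f(s)\simeq a-\tfrac12 aK_{\mathrm{eq}}(s-s_0)^2$, the equatorial limit $\rho(a^-)=1/\sqrt{a^2K_{\mathrm{eq}}}$, where $K_{\mathrm{eq}}=K(s_0)$.

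The first step is to turn left-handedness into a sign condition on $\rho$. The rotational $S^1$-action on $\UT S$ has $c$ as moment map, commutes with the geodesic flow, and realises $\UT S\cong\mathbb{RP}^3$ as a Seifert fibration whose two exceptional fibres sit over the poles, the two equatorial geodesics being regular fibres. Reducing by this action, orbits project to the \emph{disjoint} level circles $\{c=\mathrm{const}\}$ in the $(s,\psi)$-cylinder, so two periodic orbits never meet in the quotient and their linking number in $\UT S$ is given by an explicit expression in the winding data $\rho(c_1),\rho(c_2)$ and the Euler data of the fibration. I expect the upshot to be that the flow is left-handed precisely when $\rho(c)<2$ for every $c$, i.e. when near-equatorial geodesics wind less than once around the axis per oscillation; the binding constraint occurs as $c\to a$, reading $\rho(a^-)<2$, that is $a^2K_{\mathrm{eq}}>1/4$.

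The second, and cleanest, step bridges the pinching hypothesis with this equatorial inequality by Sturm comparison. On $[0,s_0]$ the profile solves $f''=-Kf$ with $f(0)=0,\ f'(0)=1$ and $K\le K_{\max}$; comparing with the constant-curvature solution $\tfrac{1}{\sqrt{K_{\max}}}\sin(\sqrt{K_{\max}}\,s)$ via the Wronskian $W=f'g-fg'$ shows $a=\max f\ge 1/\sqrt{K_{\max}}$, whence
\[
a^2K_{\mathrm{eq}}\ \ge\ a^2K_{\min}\ =\ \delta\,(a^2K_{\max})\ \ge\ \delta\ >\ \tfrac14 .
\]
Thus $\delta>1/4$ forces $\rho(a^-)<2$; the remaining work is to promote this to the uniform bound $\rho(c)<2$ for all $c$ (via the same comparison on each turning band, or monotonicity of $\rho$) and to feed it into the linking formula, yielding left-handedness.

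For the converse I would build, for each $\delta\le1/4$, an elongated (prolate-type) sphere of revolution realising exactly the pinching $\delta$ and with $a^2K_{\mathrm{eq}}\le1/4$, hence $\rho(a^-)\ge2$; by the displayed inequality such surfaces necessarily satisfy $\delta\le1/4$, so the two regimes meet exactly at $1/4$. At the critical value $\rho(a^-)=2$ a near-equatorial geodesic winds twice per oscillation while crossing the equator twice, and I expect a symmetric pair of near-equatorial closed geodesics (or such a geodesic together with one equatorial orbit) to have linking number exactly $0$; together with the generically negative pairs this gives a flow that is neither left- nor right-handed, while pushing $\rho(a^-)$ strictly above $2$ for $\delta<1/4$ produces a strictly positive linking pair. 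The main obstacle throughout is the explicit linking-number formula and the verification that its sign is governed exactly by the threshold $\rho=2$: both the reduction of left-handedness to $\rho(c)<2$ and the uniform control of $\rho$ away from the equator are where the real analysis sits, whereas the Sturm bound and the construction of the extremal family are comparatively routine.
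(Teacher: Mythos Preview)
Your outline correctly identifies the governing criterion: in the paper's language, left-handedness is equivalent to the absence of an \emph{asymptotic figure-eight}, i.e.\ to $\sup_\gamma |p_\gamma|/q_\gamma<2$, which is precisely your condition $\rho(c)<2$ for all $c$. The paper establishes this equivalence carefully (Lemma~\ref{lemma-LkEquator}, Proposition~\ref{prop-linke}, Proposition~\ref{prop:figure-eight}) by computing linking numbers of lifts via Seifert desingularisation and Reidemeister~II moves; your Seifert-fibration sketch is aimed at the same target and is plausible, though the details you leave out are exactly what the paper spends Sections~3--5 on.

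The real gap is the forward implication $\delta>1/4\Rightarrow\rho(c)<2$ for \emph{all} $c$. Your Sturm argument is neat but only controls the equatorial limit $\rho(a^-)$; you yourself flag that the uniform bound remains, proposing ``the same comparison on each turning band, or monotonicity of $\rho$''. Neither route is straightforward: $\rho$ is not monotone for general profiles, and the integrand $\frac{c}{f\sqrt{f^2-c^2}}$ depends on $f$ in a way that a pointwise Sturm comparison does not obviously dominate. The paper does \emph{not} argue this way at all: it imports Lemma~3.9 of Abbondandolo--Bramham--Hryniewicz--Salom\~ao (a consequence of Toponogov), which says that for $\delta>1/4$ every geodesic arc between consecutive equator crossings is injective, hence its endpoints are distinct with a uniform lower bound on their separation, which is exactly $\rho(c)<2$ uniformly. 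So the missing idea in your proposal is a comparison-geometric input beyond the elementary Sturm bound on the profile.

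For the converse, your plan is close in spirit, but note a pitfall: the plain ellipsoid $\mathbb E_b$ has $\delta(\mathbb E_b)=1/b^4$ and $\rho(a^-)=b$, so it fails left-handedness only for $b\ge 2$, i.e.\ $\delta\le 1/16$; in the range $\delta\in(1/16,1/4]$ the ellipsoid is still left-handed and cannot serve as your example. The paper resolves this by gluing an equatorial band of $\mathbb E_{1/\sqrt\delta}$ (whose equatorial rotation number is $1/\sqrt\delta\ge 2$) to round polar caps, producing a $\delta$-pinched sphere with a figure-eight (or asymptotic figure-eight when $\delta=1/4$) confined near the equator. Your generic ``prolate-type'' construction would need to be sharpened along these lines.
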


The proof relies on a direct computation of the linking numbers between any two closed geodesics and on the fact that the geodesic flow admits an invariant function given by Clairaut's identity, see Section~\ref{ss-clairaut}. 

A ``figure-eight'' geodesic on a surface of revolution is a closed geodesic that turns twice around the axis of revolution and intersects the equator at one point with multiplicity two. 
Lifts of figure-eight closed geodesic and the equator have zero linking number (see Proposition~\ref{prop-linke}). 
We say that the surface has no asymptotic figure-eight geodesic if there exists $a>0$ such that for every closed geodesic $\gamma$ and two points $p_1, p_2$ in the intersection of $\gamma$ with the equator that are consecutive along $\gamma$, the distance between $p_1$ and $p_2$ is at least $a$. 
A figure-eight yields an asymptotic figure-eight, but the converse is not true in general. 
The key point for proving Theorem~\ref{thm-revolution} is the following:

\begin{prop}\label{prop:figure-eight}
Let $S$ be a sphere of revolution in~$\mR^3$, then the geodesic flow on~$\UT S$ is left-handed if and only if there is no asymptotic figure-eight geodesic on~$S$.
\end{prop}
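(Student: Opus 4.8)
The plan is to prove the equivalence by showing that the linking number between any two closed geodesics is negative unless the surface admits an asymptotic figure-eight configuration, in which case some pair of lifts has non-negative (in fact zero in the limit) linking number. The setup will use Clairaut's integral as the invariant function that stratifies the phase space: each geodesic on a surface of revolution has a conserved quantity $c = r\sin\theta$ (radius times sine of the angle with the meridian), and the closed geodesics are organized by this value. The linking number between two periodic orbits will be computed via the explicit formula from Section~\ref{ss-clairaut}, reduced to a count of signed crossings or a winding-number integral along the axis of revolution.

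**First I would** establish the direction that is essentially a computation: if there is no asymptotic figure-eight geodesic, then there is a uniform lower bound $a>0$ on the equatorial arc-length between consecutive equator crossings of any closed geodesic, and I would show this bound forces every pairwise linking number to be strictly negative. The key is that Proposition~\ref{prop-linke} already identifies the figure-eight configuration as the \emph{threshold} case of zero linking with the equator; the uniform gap $a$ keeps every geodesic strictly on the negative side of this threshold. Concretely, I expect the linking number of a closed geodesic $\gamma$ with the equator to be expressible as a monotone function of the equatorial return angle, crossing zero precisely at the figure-eight value, so a uniform separation $a$ yields a uniform negative bound.

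**The converse direction** is where the real work lies, and I would argue by contrapositive: suppose the flow is \emph{not} left-handed, so some pair of ergodic invariant measures (which, by integrability and the structure of Clairaut's foliation, I can reduce to a pair of periodic orbits or their limits) has non-negative linking number. I would then produce a sequence of closed geodesics whose consecutive equator crossings accumulate, i.e.\ whose separations tend to $0$, thereby exhibiting an asymptotic figure-eight. This requires controlling how the linking number varies as the Clairaut parameter $c$ varies: I expect to show linking is negative away from the equator crossing configurations and that the only mechanism producing a sign change or degeneration to zero is the pinching-off of a figure-eight, where two consecutive equator crossings merge.

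\textbf{The hard part} will be the converse: translating the analytic failure of left-handedness (a non-negative linking number for some invariant-measure pair) into the geometric statement that equator crossings can be made arbitrarily close. The obstacle is twofold. First, I must pass from arbitrary ergodic invariant measures to closed geodesics: since the flow is integrable, generic invariant measures are supported on the Clairaut tori (or their singular reductions), and I would approximate these by periodic orbits in the linking pairing, using that linking number is continuous under weak-* convergence of invariant measures. Second, and most delicate, I must show that the \emph{only} way linking can fail to be negative is through the figure-eight degeneration, ruling out any other source of non-negativity; this amounts to a careful monotonicity analysis of the linking number as a function of the Clairaut parameter, most likely reducing to a sign analysis of the derivative of the equatorial return map. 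I expect the explicit Clairaut computation to make this monotonicity transparent once set up correctly, with the figure-eight emerging naturally as the unique zero of the relevant linking quantity.
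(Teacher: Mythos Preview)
Your approach is essentially the paper's, but you overestimate the difficulty of the converse direction and underuse the tools already in hand. The paper's proof is a two-line algebraic reduction, not a monotonicity analysis. First, Lemma~\ref{lemma-LkEquator} (which you implicitly re-derive when you speak of approximating ergodic measures by periodic orbits) reduces left-handedness to the existence of $a>0$ with $\frac{1}{\len(\gamma)}\Lk(\vec e_\pm,\vec\gamma)<-a$ for every closed geodesic~$\gamma$; this is where the passage from arbitrary measures to closed geodesics happens, via the homotopy of each invariant torus~$T_\beta$ onto the equator in the complement of~$T_\alpha$. Second, Proposition~\ref{prop-linke} gives the explicit integers $\Lk(\vec e_+,\vec\gamma)=\tfrac{p_\gamma}{2}-q_\gamma$ and $\Lk(\vec e_-,\vec\gamma)=-\tfrac{p_\gamma}{2}$ (for $p_\gamma>0$), so the two inequalities become
\[
\frac{2a}{q_\gamma}\len(\gamma)\;<\;\frac{|p_\gamma|}{q_\gamma}\;<\;2-\frac{2a}{q_\gamma}\len(\gamma),
\]
and the existence of such an $a$ is literally equivalent to $\sup_\gamma |p_\gamma|/q_\gamma<2$, which is Definition~\ref{defn:fig8}. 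Both directions are the same algebraic equivalence; there is no ``hard part'', no need to differentiate the return map, and no separate mechanism to rule out. Your instinct that the linking with the equator is a monotone function of the return angle is correct, but the point is that this function is already given to you as an explicit rational expression in~$(p_\gamma,q_\gamma)$.
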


Applying this criterion to ellipsoids of revolution yields in particular

\begin{prop}\label{prop:ellipsoid}
Let $\mathbb{E}_b$ be the ellipsoid of revolution with axes $(1,1,b)$. 
The geodesic flow of $\UT\mathbb{E}_b$ is left-handed if and only if $b<2$. \end{prop}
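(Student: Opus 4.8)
The plan is to feed the ellipsoid into the criterion of Proposition~\ref{prop:figure-eight}: it suffices to decide, as a function of $b$, whether $\mathbb{E}_b$ carries an asymptotic figure-eight geodesic. I parametrise the profile of $\mathbb{E}_b$ by arclength $u$ along a meridian, $u=0$ being the equator, and write the metric as $du^2+f(u)^2\,d\phi^2$, so that $f$ is the distance to the axis, with $f(0)=1$ and $f'(0)=0$ (the equator is the fattest parallel, hence a closed geodesic). By Clairaut's relation (Section~\ref{ss-clairaut}) a geodesic with constant $c\in(0,1)$ oscillates in the band $\{f\ge c\}$ symmetric about the equator, which it crosses transversally; between two crossings consecutive along the geodesic the longitude increases by
\begin{equation}\label{eq:arch}
\Delta\phi(c)=2\int_{0}^{u_t(c)}\frac{c\,du}{f\sqrt{f^2-c^2}},\qquad f(u_t(c))=c .
\end{equation}
A non-meridian geodesic closes exactly when $\Delta\phi(c)\in\pi\mQ$, and a figure-eight is precisely the case $\Delta\phi(c)=2\pi$ (two arches, two equatorial crossings at the same point, two turns around the axis). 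Since the equator has length $2\pi$, two close consecutive crossings lie at intrinsic distance comparable to $\min_{k\in\mZ}\lvert\Delta\phi(c)-2\pi k\rvert$; as (for $b\neq1$) closed geodesics are dense in the parameter $c$ and $\Delta\phi$ is continuous, $\mathbb{E}_b$ has no asymptotic figure-eight if and only if $\Delta\phi(c)$ stays bounded away from $2\pi\mZ_{>0}$ (and from $0$) as $c$ ranges over $(0,1)$.

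I would then locate the range of $\Delta\phi$ through its two boundary values. As $c\to0^+$ the geodesic grazes a pole; since $\mathbb{E}_b$ is smooth there ($f(u)\sim u_{\mathrm{pole}}-u$), the near-pole part of \eqref{eq:arch} contributes a half-turn $\pi$ while the remainder vanishes, giving $\lim_{c\to0^+}\Delta\phi(c)=\pi$. As $c\to1^-$ the geodesic hugs the equator; expanding $f(u)=1-\tfrac12 K_{\mathrm{eq}}u^2+\cdots$ (using $K=-f''/f$) turns \eqref{eq:arch} into a Gaussian-type integral and yields $\lim_{c\to1^-}\Delta\phi(c)=\pi/\sqrt{K_{\mathrm{eq}}}$. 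A direct computation of the Gaussian curvature of $\mathbb{E}_b$ at the equator gives $K_{\mathrm{eq}}=1/b^2$, so this limit equals $\pi b$. Thus the two ends of the range are $\pi$ (at the poles) and $\pi b$ (at the equator), and the whole question becomes whether the value $2\pi$ lies in the image of $\Delta\phi$.

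The decisive analytic input, and the step I expect to be the main obstacle, is the monotonicity of the rotation function $c\mapsto\Delta\phi(c)$ on $(0,1)$: granting it, the image of $\Delta\phi$ is exactly the open interval with endpoints $\pi$ and $\pi b$. I would prove this by changing variables $u\mapsto v=f(u)$ in \eqref{eq:arch} to reach an explicit elliptic integral in which the dependence on $c$ can be isolated, and then differentiating under the integral sign (equivalently, comparing the integrands for two values of $c$) to fix the sign of $\Delta\phi'$; this is the one place where the specific shape of $\mathbb{E}_b$, rather than a general surface of revolution, is genuinely used, since $\Delta\phi$ need not be monotone in general.

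With monotonicity in hand the proposition follows by inspecting the interval $(\pi,\pi b)$ for $b>1$ (the interval is $(\pi b,\pi)\subset(0,\pi)$ when $b\le1$, and degenerates to $\{\pi\}$ when $b=1$). If $b<2$ the image is contained in a compact subinterval of $(0,2\pi)$, so $\Delta\phi$ stays bounded away from $2\pi\mZ_{>0}$ and, by the criterion above and Proposition~\ref{prop:figure-eight}, the geodesic flow is left-handed. If $b>2$ then $\pi<2\pi<\pi b$ forces, by the intermediate value theorem, a parameter $c^\ast$ with $\Delta\phi(c^\ast)=2\pi$: a genuine figure-eight, hence an asymptotic one, so the flow is not left-handed. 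In the borderline case $b=2$ the supremum $2\pi=\pi b$ is approached but not attained as $c\to1^-$, so there are closed geodesics whose consecutive equatorial crossings are arbitrarily close without ever coinciding; this is exactly an asymptotic figure-eight, so $\mathbb{E}_2$ is not left-handed either, and the equivalence ``left-handed $\iff b<2$'' is proved.
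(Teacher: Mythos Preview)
Your strategy is the paper's: reduce to Proposition~\ref{prop:figure-eight} and study the return longitude $\Delta\phi(c)$ (the paper's $2u_{max}$, parametrised by $\sigma_0=c$). The paper derives the explicit integral~\eqref{eq2}, then performs two changes of variable to obtain a form in which dominated convergence gives $\lim_{\sigma_0\to1}u_{max}=\tfrac{\pi}{2}b$, i.e.\ your $\Delta\phi(1^-)=\pi b$; from this alone it asserts that $u_{max}$ is bounded away from $\pi$ iff $b<2$. Your route to the same limit via $K_{\mathrm{eq}}=1/b^2$ is more conceptual but equivalent; your additional computation of $\Delta\phi(0^+)=\pi$ is not in the paper.

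The substantive difference is that you isolate monotonicity of $c\mapsto\Delta\phi(c)$ as the crux. You are right that this (or at least an upper bound $\Delta\phi(c)\le\pi b$) is what is actually needed for the direction ``$b<2\Rightarrow$ left-handed'': knowing only the limit at $c\to1^-$ does not by itself exclude values $\ge 2\pi$ for intermediate $c$. The paper does not address this point explicitly, so your proposal is in fact more scrupulous about the logical structure than the paper's own proof. That said, you do not carry out the monotonicity argument either; your plan (pass to $v=f(u)$ and differentiate under the integral) is reasonable, but this step is where the real work lies and would need to be written out to make the proof complete. For the other direction ($b\ge2\Rightarrow$ not left-handed) both arguments are fine: the single limit $\Delta\phi(1^-)=\pi b\ge 2\pi$ already forces $\sup_c\Delta\phi\ge 2\pi$, which is exactly the asymptotic figure-eight condition.
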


Since the pinching $\delta(\mE_b)$ is equal to $\frac{1}{b^4}$, this proposition implies the second part of Theorem~\ref{thm-revolution} for $\delta\le\frac1{16}$. 
For $\delta$ in $(\frac1{16}, \frac14]$, we note that the (asymptotic) figure-eight in a pinched ellipsoid lies in a small neighbourhood of the equator and in particular does not use all the freedom brought by the pinching. 
Our proof of the second part of the theorem is obtained by smoothly gluing a neighborhood of the equator on~$\mathbb{E}_{1/\sqrt\delta}$ to a piece of round sphere. 




\subsection*{Acknowledgements}

We thank A. Florio and U. Hryniewicz, this work started from discussions with them some years ago. 
We also thank Andrés Navas for finding the good change of coordinates in the end of the proof of Proposition~\ref{prop:ellipsoid}. 

This project was (ultimately) supported by the ANR project Anodyn (ANR-24-CE40-5065). 

\section{Linking numbers and left-handed flows}

As mentioned before,  a flow is \emph{left-handed} if all invariant measures have negative linking number. 
In this section, we review some notions and results that allow to make this definition precise. 

\subsection{Linking number of links}
Let $M$ be a 3-dimensional compact manifold that is a rational homology sphere. 
Then the integral homology\linebreak group~$\H_1(M;\mZ)$ is finite, in particular it has an exponent~$e$ which is a positive integer.  
Given an oriented link~$L_1$, its multiple~$eL_1$  bounds a surface~$S_1$. 
Given another link~$L_2$ disjoint from~$L_1$, their \emph{linking number}, denoted by $\Lk(L_1, L_2)$, is the algebraic intersection between $S_1$ and $L_2$, divided by~$e$. 
Thanks to the nullity of~$[L_2]$, it does not depend on the choice of the surface~$S_1$. 
By construction, the linking number is symmetric, bilinear and lies in~$\frac1e\mZ$.

An equivalent construction is obtained as follows. 
The \emph{normal blow-up} of~$L_1$ is the manifold~$M_{L_1}$ obtained by replacing every point~$p$ of~$L_1$ by the unit normal bundle to~$L_1$, denoted by~$N^1_{L_1}(p)$. 
The manifold~$M_{L_1}$ has a toric boundary~$N^1_{L_1}$. 
Because~$M$ is a homology sphere, $\H_1(M_{L_1};\mR)$ is homeomorphic to~$\mR$. 
Denote by~$m_1$ a meridian of~$L_1$, and by $[m_1]$ its class in $\H_1(M_{L_1};\mR)$. 
A multi-curve~$L_2$ has a homology class~$[L_2]$ which is a multiple of~$[m_1]$.  
The \emph{linking number} $\Lk(L_1, L_2)$ is then the unique rational number so that 
$$[L_2]=\Lk(L_1, L_2)[m_1].$$

\subsection{Linking number of invariant measures}\label{sec: linkmeasures}
Assume now that $M$ carries a flow~$(\varphi^t)_{t\in\mR}$ of class~$C^1$. 
Given an ergodic $\varphi^t$-invariant probability measure~$\mu$ and a $\mu$-regular point~$x$, one can pick a sequence~$(t_n)\to+\infty$ so that $\varphi^{t_n}(x)$ tends to~$x$. 
Concatenating the orbit segment $\varphi^{[0,t_n]}(x)$ with the shortest geodesic arc connecting its endpoints, one obtains a closed curve, denoted by~$k_\varphi(x, t_n)$.
Playing the same game with a second ergodic measure~$\nu$ and a $\nu$-regular point~$x'$ not on the orbit of~$x$, one would like to define the linking of $\mu$ and $\nu$ as $\lim_{t_n, t'_n\to\infty}\frac1{t_nt'_n}\Lk(k_\varphi(x, t_n), k_\varphi(x', t'_n))$.
There are difficulties in order to make the above definition rigorous. 
However the flows we consider in this note are integrable, and for this type of flows we shall later prove

\begin{lemma}[linking of ergodic invariant measures]\label{lemma_linkmeasures}
Let $S$ be a sphere of revolution in $\mR^3$ endowed with the induced metric and let $\varphi^t$ be its geodesic flow. 
Let $\mu, \nu$ be two $\varphi^t$-invariant ergodic measures that do not charge the same periodic orbit, and let $x, x'$ be regular points for $\mu$ and $\nu$ respectively, then the limit
\[\lim_{t_n, t'_n\to\infty} \frac1{t_nt'_n}\Lk(k_\varphi(x, t_n), k_\varphi(x', t'_n))\]
exists and does not depend on $x, x', t_n, t'_n$.
\end{lemma}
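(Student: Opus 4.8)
The plan is to leverage the integrability of the geodesic flow to reduce the statement to an equidistribution computation on invariant tori. First I would describe the ergodic measures. Clairaut's first integral $C$ is a continuous $\varphi^t$-invariant function, so any ergodic measure is carried by a single level set $C^{-1}(c)$. For regular values $c$ these level sets are finite unions of $\varphi^t$-invariant $2$-tori on which the flow is smoothly conjugate to a linear flow; the ergodic measures are therefore the Haar measures of the tori carrying an irrational (hence uniquely ergodic) linear flow, together with the periodic orbits supported on the rational tori and on the singular level sets (the equator and the meridians). Up to treating the periodic-orbit cases directly, this reduces the lemma to the situation where $\mu,\nu$ are the Haar measures of two invariant tori $T,T'$ and $x,x'$ are points with dense orbits in $T,T'$.

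Next I would turn the linking number into an intersection number amenable to averaging. Since $\UT S\cong\mathbb{RP}^3$ is a rational homology sphere (its first homology is $\mathbb{Z}/2$, with exponent $e=2$), each closed curve $k_\varphi(x,t_n)$ is rationally null-homologous, and a fixed multiple $e\,k_\varphi(x',t'_n)$ bounds a surface $\Sigma_n$, so that
\[
\Lk\bigl(k_\varphi(x,t_n),k_\varphi(x',t'_n)\bigr)=\tfrac1e\,\bigl(k_\varphi(x,t_n)\cdot\Sigma_n\bigr).
\]
The key is to encode this intersection by a fixed geometric kernel: a smooth function (a solid-angle/linking density) $f$ on $\UT S\times\UT S$ away from the diagonal, built from a Green-type primitive of the linking pairing of $\mathbb{RP}^3$, for which
\[
\Lk\bigl(k_\varphi(x,t_n),k_\varphi(x',t'_n)\bigr)=\int_0^{t_n}\!\!\int_0^{t'_n} f\bigl(\varphi^s x,\varphi^{s'} x'\bigr)\,ds\,ds'+O(t_n+t'_n),
\]
the error coming from the bounded-length geodesic arcs used to close up the two orbit segments.

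It then remains to pass to the limit. The error term is $o(t_nt'_n)$, so convergence of the normalized linking reduces to convergence of the double average of $f$ along the two orbits. When $T\neq T'$ the two tori are disjoint, so $f(y,\cdot)$ is continuous on $T'$ for every $y\in T$; applying unique ergodicity of the linear flow on $T'$ gives $\tfrac1{t'_n}\int_0^{t'_n}f(y,\varphi^{s'}x')\,ds'\to\int_{T'}f(y,\cdot)\,d\nu$ uniformly in $y$, and a second application of unique ergodicity on $T$ yields
\[
\frac1{t_nt'_n}\int_0^{t_n}\!\!\int_0^{t'_n} f\bigl(\varphi^s x,\varphi^{s'} x'\bigr)\,ds\,ds'\longrightarrow \iint f\,d\mu\,d\nu,
\]
a quantity independent of $x,x',t_n,t'_n$. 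Note that this iterated averaging avoids any need for the product flow on $T\times T'$ to be ergodic, sidestepping possible resonances between the two tori.

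I expect the main obstacle to be the construction and control of the kernel $f$, together with the coincident-torus case $T=T'$ (allowed, since the hypothesis only forbids charging a common \emph{periodic} orbit): there $f(y,\cdot)$ is singular at $y\in T'$, and one must show that the diagonal singularity is $(\mu\times\nu)$-integrable and that its interaction with the closing arcs still only produces the advertised bounded error. Verifying this integrability and the uniform error bound is the technical heart of the argument; the explicit toral structure coming from Clairaut's identity is what makes these estimates tractable, and also yields a closed formula for the limit.
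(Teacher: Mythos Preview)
Your plan is sound and would prove the lemma, but it takes a substantially heavier route than the paper. The paper's argument is purely topological and avoids any analysis: each closed-up orbit segment $k_\varphi(x,t_n)$ sits on (a neighbourhood of) its invariant torus $T_\alpha$, and the linking number of two curves lying on fixed tori $T_\alpha, T_\beta\subset\UT S$ depends only on their homology classes in $\H_1(T_\alpha;\mZ)$ and $\H_1(T_\beta;\mZ)$ (curves on a torus are determined up to isotopy by their homology class, and isotopies in $T_\alpha$ stay disjoint from $T_\beta$). Since the class of $k_\varphi(x,t_n)$ is, up to a bounded error, $t_n(x_\alpha,y_\alpha)$ in the basis $(\ell_\alpha,m_\alpha)$, the normalized linking converges to the fixed bilinear pairing of the asymptotic directions $(x_\alpha,y_\alpha)$ and $(x_\beta,y_\beta)$. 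No Gauss kernel, no ergodic averaging, no diagonal singularity.

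Your approach via a linking-density kernel and iterated unique-ergodicity is essentially the general machinery that Ghys sketches for arbitrary flows on rational homology spheres, and it does buy generality beyond the integrable setting. But the two items you correctly flag as the technical heart---constructing and bounding the kernel on $\mR P^3$, and handling the integrable singularity when $T=T'$---are precisely the parts you leave open, whereas the paper's argument disposes of them for free: when $\alpha=\beta$ one simply takes $x,x'$ on distinct orbits, obtaining two parallel disjoint torus knots whose linking is again read off from the common homology class. The toral structure you invoke at the end to make your estimates ``tractable'' is in fact already enough to bypass the kernel entirely.
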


We then define the \emph{(dynamical) linking} $\Lk^\varphi(\mu, \nu)$ of the invariant measures as the above limit. 

\begin{rmk}
Lemma~\ref{lemma_linkmeasures} is also valid for non integrable flows, we refer to Ghys' article~\cite{Ghys} for a sketch of proof. Completing the sketch is beyond the scope of this note. 
We provide a proof for spheres of revolution in Section~\ref{ss-measures}.
\end{rmk} 

If the invariant measure $\mu$ charges a periodic orbit~$\gamma$, one can perform the following alternative construction. 
We consider the normal blow-up $M_\gamma$ defined in the previous subsection. 
Using its space derivative $D\varphi$, the flow~$\varphi^t$ extends to the boundary torus~$N^1_\gamma$. 
Given an ergodic invariant measure~$\nu$ not charging~$\gamma$, it has an asymptotic cycle $\sigma^\gamma_\nu$ in~$\H_1(M_\gamma; \mR)$. 
Recall that this asymptotic cycle is defined, for $x$ a $\nu$-regular point, as~$\lim_{t\to\infty}\frac1t[k_\varphi(x,t)]$. 
This object is known as Schwartzman's asymptotic cycle~\cite{Schwartzman}. 
Therefore~$\Lk^\varphi(\mu, \nu)$ is equal to~$\frac1{\len(\gamma)}\sigma^\gamma_\nu$ in this case, where $\len(\gamma)$ is the period of the orbit~$\gamma$. 

This second definition of the linking can be naturally extended to a dynamical self-linking number of a periodic orbit: 
assuming that $\mu$ is the ergodic invariant measure charging a periodic orbit~$\gamma$, we can consider a $\varphi^t$-invariant measure~$\mu^\circ$ supported on the boundary torus~$N^1_\gamma$ of~$M_\gamma$ (this measure is not necessarily unique). 
Such an invariant measure has an asymptotic cycle~$\sigma^\gamma_{\mu^\circ}$ in~$\H_1(M_\gamma; \mR)$, and we set~$\Lk^\varphi(\mu, \mu)$ to be~$\frac1{\len(\gamma)}\sigma^\gamma_{\mu^\circ}$. 
Note that~$\sigma^\gamma_{\mu^\circ}$ is the translation number of the extension of the flow to~$N^1_\gamma$ with respect to the Seifert framing, hence it does not depend on the choice of~$\mu^\circ$. 

Summarizing the above discussion, we get

\begin{defn}
Let $\varphi^t$ be an integrable flow on a rational homology sphere~$M$. 
Let $\mu, \nu$ be two $\varphi^t$-invariant ergodic probability measures. 
\begin{enumerate}
    \item[(A)] If $\mu, \nu$ do not charge the same periodic orbit, then we define $\Lk^{\varphi}(\mu, \nu)$ as $\lim_{t_n, t'_n\to\infty}\frac1{t_nt'_n}\Lk(k_\varphi(x, t_n), k_\varphi(x', t'_n))$, where $x, x'$ are regular for $\mu$ and $\nu$ respectively;
    \item[(B)] If $\mu$ charges a periodic orbit~$\gamma$, then we define $\Lk^{\varphi}(\mu, \mu)$ as $\frac1{\len(\gamma)}\sigma^\gamma_{\mu^\circ}$.
\end{enumerate}
\end{defn}

Assume now that $\mu$ is the ergodic invariant measure charging a periodic orbit $\gamma$. Let $\nu_n$ be a sequence of ergodic measures that converge in the \mbox{weak-$*$} sense to $\mu$. 
We claim that $\Lk^{\varphi}(\nu_n,\mu)$ converges to $\Lk^\varphi(\mu,\mu)$. Indeed, there is an invariant measure $\mu^\circ$ in $N_\gamma^1$ that is the weak-$*$ limit of the measure $\nu_n$ in $M_\gamma$. 
Let $x_n$ be a $\nu_n$-regular point, in this setting there is a subsequence of these points that converge to a point $y\in N_\gamma^1$ that is $\mu^\circ$-regular. 
The continuity of the flow  ensures the convergence of the asymptotic cycles.

We can then define left-handedness: 

\begin{defn}\label{defn-lefthanded}
Let $\varphi^t$ be an integrable flow on a rational homology sphere~$M$. 
We say that $\varphi^t$ is \emph{left-handed} if for all $\varphi^t$-invariant ergodic probability measures $\mu, \nu$, one has $\Lk^\varphi(\mu, \nu)<0$. 
\end{defn}

\section{Linking numbers in unit tangent bundles}

As explained in the introduction, for the flows we consider in this note, checking left-handedness will eventually boil down to computing linking of pairs of closed orbits. 
In this purely topological part, we explain how to compute linking numbers for lifts of geodesics to the unit tangent bundles. 

From now on $M$ is the 3-manifold~$\UT\mS^2$. 
An orientation of $\mS^2$ induces an orientation of the fibers by trigonometric order, and the orientation of~$\UT\mS^2$ is obtained by concatenating both.
$\UT\mS^2$ is a circle-bundle over~$\mS^2$ with Euler number $2$. 
It is homeomorphic to~$\mR P^3$. 
It is a rational homology sphere, with~$\H_1(\UT \mS^2, \mZ)\simeq \mZ/2\mZ$. 
Therefore the linking number of any pair of disjoint links is a number in~$\frac12\mZ$.

\subsection{Linking of simple disjoint curves}
We start with the linking of any two fibers:

\begin{lemma}\label{lemma-linkfibers}
Given two distinct points~$p_1, p_2$ on~$\mS^2$, the linking number of their oriented fibers~$\Lk(\UT p_1$, $\UT p_2)$ is~$-\frac12$. 
\end{lemma}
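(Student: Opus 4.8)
The plan is to use the normal blow-up description of the linking number. Since $\H_1(\UT\mS^2;\mZ)\simeq\mZ/2\mZ$ has exponent $e=2$, I would work in the blow-up $M_{L_1}$ of the fiber $L_1=\UT p_1$, identify the classes of $L_2=\UT p_2$ and of the meridian $m_1$ inside $\H_1(M_{L_1};\mR)\simeq\mR$, and then read off the linking number from the defining relation $[L_2]=\Lk(L_1,L_2)[m_1]$.

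First I would observe that $M_{L_1}$ is the restriction of the circle bundle $\UT\mS^2$ to the complement $\mS^2\setminus D$ of a small disk $D$ around $p_1$. A circle bundle over a disk is trivial, so $M_{L_1}\cong(\mS^2\setminus D)\times S^1$ and $\H_1(M_{L_1};\mR)\simeq\mR$ is generated by the fiber class $f$. The second fiber $L_2=\UT p_2$ lies over an interior point of $\mS^2\setminus D$, hence is homologous to a regular fiber, giving $[L_2]=f$.

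The heart of the computation is to express the meridian $m_1$ in terms of $f$. The meridian bounds a disk transverse to $L_1$ inside the removed solid-torus neighbourhood $V\cong D\times S^1$ of $L_1$, so it can be written as $m_1=\partial D\times\{\mathrm{pt}\}$. Transporting this curve to the trivialisation over $\mS^2\setminus D$ across the gluing annulus, the clutching of the bundle twists the fiber coordinate by the Euler number as one runs once around $\partial D$. Since $\UT\mS^2$ has Euler number $2$, this yields $[m_1]=\pm 2f$ in $\H_1(M_{L_1};\mR)$, and therefore $\Lk(L_1,L_2)=\pm\tfrac12$.

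The main obstacle is fixing the sign: the magnitude $\tfrac12$ is forced by the exponent and the Euler number, but the sign depends on carefully matching the three orientation conventions recorded above, namely the trigonometric orientation of the fibers, the concatenation orientation of $\UT\mS^2$, and the sign $+2$ of the Euler number. I would pin it down by computing the clutching twist in an explicit chart near $p_1$, or equivalently by exhibiting a concrete Seifert surface for $2L_1$ as the image of a unit vector field on $\mS^2$ with a single index-$2$ zero at $p_1$: its boundary wraps twice around $\UT p_1$, and being a section over the base it meets $\UT p_2$ transversally in exactly one point. Checking that the induced intersection sign is negative then gives $\Lk(\UT p_1,\UT p_2)=-\tfrac12$.
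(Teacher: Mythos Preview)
Your proposal is correct and, for the decisive step, essentially coincides with the paper's argument: the paper constructs the very Seifert surface you describe in your last paragraph, namely the section $S_{X_1}\subset\UT\mS^2$ obtained by lifting a vector field on $\mS^2$ with a single index-$2$ zero at $p_1$, and computes its intersection with $\UT p_2$. Your blow-up/Euler-number framing for the magnitude $\tfrac12$ is a convenient repackaging of the same fact (the index-$2$ zero is precisely the Euler number), but it is not really a different argument.

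One small caution on the signs you announce at the end. With the paper's conventions (orient $S_{X_1}$ by lifting the orientation of $\mS^2$, and orient $\UT\mS^2$ as base-then-fiber), the intersection $S_{X_1}\cap\UT p_2$ is \emph{positive}, not negative; what produces the minus sign is that the oriented boundary of $S_{X_1}$ is $-2\,\UT p_1$, not $+2\,\UT p_1$. Your two sign slips cancel, but when you actually write the proof you should get this straight: the section meets a regular fiber once positively, and the boundary computation near the index-$2$ zero (outward-normal convention) yields $-2\,\UT p_1$, whence $\Lk(-2\,\UT p_1,\UT p_2)=+1$ and $\Lk(\UT p_1,\UT p_2)=-\tfrac12$.
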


\begin{rmk}
As mentioned before, we have a different orientation convention than from Florio and Hryniewicz~\cite{FH}. The orientation chosen above disagrees with the one chosen by Florio and Hryniewicz. 
\end{rmk}

\begin{proof}
To compute the linking number with the fiber of $p_1$, we claim that there is a natural surface bounded by $-2\UT p_1$. 
Indeed there exists a $C^1$ vector field~$X_1$ on~$\mS^2$ that vanishes at~$p_1$ only. 
By the Poincar\'e-Hopf formula, the index of~$X_1$ at~$p_1$ is $\chi(\mS^2)=2$.
The vector field $X_1$ is not of norm 1, but defines a direction at each point of $\mS^2\setminus \{p_1\}$. 
Hence, lifting the direction defined by~$X_1$ to~$\UT \mS^2$, we obtain a topological surface~$S_{X_1}\subset \UT \mS^2$. 
Orienting~$S_1$ by lifting the orientation of~$\mS^2$, its oriented boundary is then~$-2\UT p_1$, see Figure~\ref{fig-linkfibers}. 
The surface~$S_{X_1}$ intersects once the fiber of every point distinct from~$p_1$, since at each point $p\in \mS^2$ different from $p_1$ the vector field~$X_1$ is non zero. 
This intersection is positive, hence equal to $1$. Therefore one has $\Lk(-2\UT  p_1, \UT  p_2)=1$, so that $\Lk(\UT  p_1, \UT  p_2)=-\frac12$.
\end{proof}

\begin{figure}[htb]
\includegraphics[width=.7\textwidth]{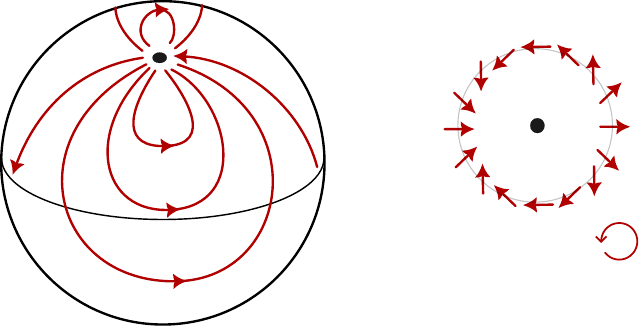}
\caption{On the left, a vector field~$X_1$ on~$\mS^2$ that only vanishes at a point~$p_1$. On the right, when following a circle around~$p_1$, the vector field~$X_1$ makes $+2$ turns, so that its index is~$+2$. Considering the orientation of the surface~$S_{X_1}$ coming from the orientation of~$\mS^2$ (bottom right), one sees that its oriented boundary consists in following the lift of~$X_1$ around~$p_1$ in the negative direction, hence the oriented boundary of~$S_{X_1}$ is~$-2\UT p_1$.}
\label{fig-linkfibers}
\end{figure}


From this one can also compute the linking of two disjoint simple curves. 
Given $\gamma_1, \gamma_2$ two disjoint $C^1$ oriented, simple, closed curves in~$\mS^2$, there is an oriented annulus $A\subset\mS^2$ whose oriented boundary is $\pm \gamma_1\pm \gamma_2$. 
If $A$ is bounded by $\gamma_1-\gamma_2$ or by $-\gamma_1+\gamma_2$ we say that the orientations of~$\gamma_1$ and $\gamma_2$ \emph{coincide}. 

\begin{prop}\label{prop-lksimple}
Given $\gamma_1, \gamma_2$ two disjoint $C^1$ oriented, simple, closed curves in~$\mS^2$, 
\begin{itemize}
    \item if the orientations of~$\gamma_1$ and~$\gamma_2$ coincide, then~$\Lk(\vec\gamma_1, \vec\gamma_2)=\frac12$,
    \item otherwise one has~$\Lk(\vec\gamma_1, \vec\gamma_2)=-\frac12$.
\end{itemize}
\end{prop}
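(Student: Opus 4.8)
The plan is to fill in each of the two complementary disks of $\gamma_1\cup\gamma_2$ with a vector field tangent to its boundary, exactly as in the proof of Lemma~\ref{lemma-linkfibers}, so as to trade each lift $\vec\gamma_i$ for a single fiber, and then to invoke that lemma together with bilinearity of the linking number.

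First I would record the topology: two disjoint simple closed curves cut $\mS^2$ into two closed disks $D_1, D_2$ with $\partial D_i=\gamma_i$ and the annulus $A$ between them. On $D_i$ choose a $C^1$ vector field $X_i$ that restricts to the unit tangent $\dot\gamma_i$ along $\gamma_i$ and vanishes only at one interior point $p_i$. A field tangent to $\partial D_i$ turns once positively along the boundary regardless of its direction, so by Poincaré--Hopf the index of $X_i$ at $p_i$ is $+1$ in every case. Lifting the direction field of $X_i$ over $D_i\setminus\{p_i\}$ to $\UT\mS^2$ produces, just as in Lemma~\ref{lemma-linkfibers}, an oriented surface $S_{X_i}\subset\UT\overline{D_i}$ with oriented boundary $s_i\vec\gamma_i-\UT p_i$, where $s_i=+1$ if the given orientation of $\gamma_i$ agrees with the boundary orientation of $D_i$ induced by $\mS^2$ and $s_i=-1$ otherwise; the factor $s_i$ records that the lift of $X_i$ over $\partial D_i$ traverses $\vec\gamma_i$ backwards when the orientations disagree, while the interior zero always contributes one positively oriented fiber. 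Since $s_i^2=1$, this says that $\vec\gamma_i$ is homologous to $s_i\,\UT p_i$ in the complement of the other curve.

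Next I would assemble the two reductions. Because $S_{X_1}$ lies over $\overline{D_1}$ and $\gamma_2$ (hence $p_2$ and $\vec\gamma_2$) is disjoint from $\overline{D_1}$, and symmetrically $S_{X_2}$ lies over $\overline{D_2}$ and misses $p_1$, the two substitutions are independent. Bilinearity of the linking number together with Lemma~\ref{lemma-linkfibers} then gives
\[
\Lk(\vec\gamma_1,\vec\gamma_2)=\Lk(s_1\UT p_1,\,s_2\UT p_2)=s_1 s_2\,\Lk(\UT p_1,\UT p_2)=-\tfrac12\,s_1 s_2 .
\]

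It then remains to read off $s_1 s_2$ from the coincidence condition. Orienting $A$ by $\mS^2$ and using $0=\partial[\mS^2]=\partial[D_1]+\partial[A]+\partial[D_2]$ yields $\partial A=-\partial D_1-\partial D_2=-s_1\gamma_1-s_2\gamma_2$. Comparing with the definition, the orientations of $\gamma_1$ and $\gamma_2$ coincide, i.e. $\partial A\in\{\gamma_1-\gamma_2,\,-\gamma_1+\gamma_2\}$, exactly when $(s_1,s_2)\in\{(1,-1),(-1,1)\}$, that is exactly when $s_1 s_2=-1$. Substituting, the coinciding case gives $\Lk(\vec\gamma_1,\vec\gamma_2)=+\tfrac12$ and the other case gives $-\tfrac12$, as claimed. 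The only delicate point throughout is the sign bookkeeping: one must keep the fiber orientation, the boundary orientations of $D_1,A,D_2$, and the effect of reversing each $\gamma_i$ on its tangent lift mutually consistent. Everything else is a direct extension of the construction in Lemma~\ref{lemma-linkfibers}, and no isotopy-invariance argument is needed.
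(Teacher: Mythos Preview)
Your argument is correct and follows the same overall strategy as the paper: replace each lift $\vec\gamma_i$ by a signed fiber $\pm\UT p_i$ inside the complement of the other curve, then invoke Lemma~\ref{lemma-linkfibers} and bilinearity. The only difference is in how the replacement is justified: the paper simply asserts that $\vec\gamma_i$ is isotopic to $\pm\UT p_i$ in the complement of $\vec\gamma_j$ (by shrinking $\gamma_i$ within its disk $D_i$), whereas you build an explicit cobordism $S_{X_i}$ by lifting a vector field on $D_i$ with a single index-$1$ zero, directly recycling the construction from Lemma~\ref{lemma-linkfibers}. Your version makes the sign $s_i$ and the boundary relation $\partial S_{X_i}=s_i\vec\gamma_i-\UT p_i$ completely transparent, at the cost of a little extra setup; the paper's isotopy phrasing is shorter but leaves those signs to be unpacked case by case. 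Either way the four cases collapse to the single formula $\Lk(\vec\gamma_1,\vec\gamma_2)=-\tfrac12\,s_1s_2$, and your reading of $s_1s_2$ from $\partial A$ matches the paper's case analysis exactly.
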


\begin{proof}
Denote by $A$ the annulus bounded by~$\pm\gamma_1\pm\gamma_2$ in~$\mS^2$. 
Also pick a point~$p_1$ in the component of~$\mS^2\setminus A$ bounded by~$\pm\gamma_1$, and a point~$p_2$ in the component of~$\mS^2\setminus A$ bounded by~$\pm\gamma_2$. 

If $\partial A=\gamma_1-\gamma_2$, then $\vec\gamma_1$ is isotopic to~$\UT p_1$ in the complement of~$\vec\gamma_2$. Similarly $\vec\gamma_2$ is isotopic to~$-\UT p_2$ in the complement of~$\UT p_1$. 
Therefore one has $\Lk(\vec\gamma_1, \vec\gamma_2)=\Lk(\UT p_1, -\UT p_2)=\frac12$.

The case~$\partial A=\gamma_2-\gamma_1$ is treated similarly, and one obtains $\Lk(\vec\gamma_1, \vec\gamma_2)=\Lk(-\UT p_1, \UT p_2)=\frac12$.

If $\partial A=\gamma_1+\gamma_2$, then $\vec\gamma_1$ is isotopic to~$\UT p_1$ in the complement of~$\vec\gamma_2$, and $\vec\gamma_2$ is isotopic to~$\UT p_2$ in the complement of~$\UT p_1$. 
Therefore one has $\Lk(\vec\gamma_1, \vec\gamma_2)=\Lk(\UT p_1, \UT p_2)=-\frac12$.

And finally if $\partial A=-\gamma_1-\gamma_2$, then $\vec\gamma_1$ is isotopic to~$-\UT p_1$ in the complement of~$\vec\gamma_2$, and $\vec\gamma_2$ is isotopic to~$-\UT p_2$ in the complement of~$\UT p_1$. 
Therefore one has $\Lk(\vec\gamma_1, \vec\gamma_2)=\Lk(-\UT p_1, -\UT p_2)=-\frac12$.
\end{proof}


\subsection{Seifert's desingularization and Reidemeister II moves}\label{ss-linkinglift}

An oriented multi-curve on a surface is the union of a finite number of (non necessarily disjoint) oriented closed curves.
An oriented $C^1$ multi-curve~$\gamma$ on~$\mS^2$ is in general position if all multiple points are double transverse self-intersection points. Its lift to~$\UT\mS^2$ is denoted by~$\vec\gamma$. 

Two oriented $C^1$ multi-curves~$\gamma_1, \gamma_2$ on~$\mS^2$ are in general position if both curves are in general position and all intersection points are double transverse intersection points. 
Their lifts $\vec\gamma_1, \vec\gamma_2$ in~$\UT\mS^2$ form a link. 
In this section we explain how to compute the linking number~$\Lk(\vec\gamma_1, \vec\gamma_2)$. 

\begin{defn}
Given a multi-curve~$\gamma$ on~$\mS^2$ in general position, a multi-curve obtained from~$\gamma$ by desingularizing every double point of~$\gamma$ in a small enough disc, see Figure~\ref{fig-seifert}, is called a \emph{Seifert desingularization} of~$\gamma$. 
It is denoted by $\gamma^S$. 
\end{defn}

\begin{figure}[htb]
xz\includegraphics[width=.4\textwidth]{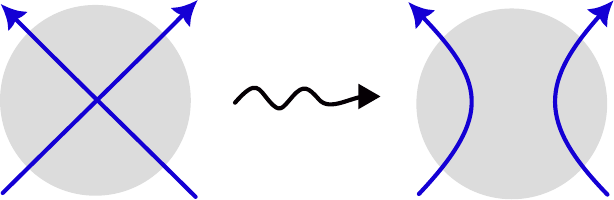}
\caption{The Seifert desingularization of a double point. One removes the double point and connects the remaining strands is the unique way that respects the orientation. The operation is made in a small disc (in gray) that is disjoint from all other parts of the curve and all other discs where the desingularization is performed.}
\label{fig-seifert}
\end{figure}

The Seifert desingularization of a multi-curve is a simple multi-curve ({\it i.e.}, it has no self-intersection points). 
The next lemma reduces the computation of the linking number of lifts of closed curves to the linking number of lifts of simple multi-curves. 

\begin{lemma}
Given two oriented $C^1$ multi-curves $\gamma_1, \gamma_2$ on~$\mS^2$, denote by $\gamma^S_1, \gamma^S_2$ some Seifert desingularizations. 
Then one has 
$$\Lk(\vec\gamma_1, \vec\gamma_2)=\Lk(\vec\gamma^S_1, \vec\gamma^S_2).$$
\end{lemma}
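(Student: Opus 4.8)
The plan is to reduce the statement to a purely local modification and then exploit the fact that the linking number is a homological invariant. First I would record two consequences of general position: the self-intersection points of $\gamma_1$ are disjoint from $\gamma_2$ (and symmetrically), and at a transverse double point the two local branches carry distinct tangent directions, so their lifts meet the fibre over that point in distinct points. Hence $\vec\gamma_1$ and any Seifert desingularization $\vec\gamma^S_1$ are genuine embedded multi-curves, each disjoint from $\vec\gamma_2$, and all linking numbers are defined. I would then peel off one curve and one double point at a time, proving
$$\Lk(\vec\gamma_1,\vec\gamma_2)=\Lk(\vec\gamma^S_1,\vec\gamma_2)=\Lk(\vec\gamma^S_1,\vec\gamma^S_2),$$
where the second equality follows from the first by applying the same argument after swapping the two curves, using symmetry of the linking number.

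For the local step, fix a double point $p$ of $\gamma_1$ and a disc $D\ni p$ chosen small enough to be disjoint from $\gamma_2$ (possible by general position) and disjoint from the other resolution discs. Its preimage $\UT D\subset\UT\mS^2$ is a solid torus $D\times\mS^1$ disjoint from $\vec\gamma_2$. The lifts $\vec\gamma_1$ and $\vec\gamma^S_1$ agree outside $\UT D$, and at the four boundary endpoints they share the same base points \emph{and} the same fibre heights; so their difference closes up to a $1$-cycle $z$ supported in $\UT D$. Since $\Lk(\,\cdot\,,\vec\gamma_2)$ depends only on the homology class in the complement of $\vec\gamma_2$ (equivalently in the blow-up $M_{\vec\gamma_2}$), it is enough to show that $z$ is null-homologous in $\UT D$.

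This I would check by a winding computation. In the local model the two branches may be taken straight through $p$, and a straight branch in direction $\alpha$ lifts to the horizontal arc at height $\alpha$ in $D\times\mS^1$. The original configuration is two horizontal arcs at the two branch directions, while the oriented Seifert resolution reconnects incoming branch $1$ to outgoing branch $2$ and incoming branch $2$ to outgoing branch $1$; along these two arcs the fibre height sweeps monotonically from $\alpha_1$ to $\alpha_2$ and from $\alpha_2$ to $\alpha_1$, i.e. by opposite amounts. The class of $z$ in $\H_1(\UT D)\cong\mZ$ is its net winding in the $\mS^1$ factor, and these two contributions cancel, so $z$ is null-homologous. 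This yields the first equality; repeating it for $\gamma_2$, with its resolution discs taken small enough to avoid $\gamma^S_1$ (which stays in a small neighbourhood of $\gamma_1$), gives the second.

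The main obstacle is precisely this winding cancellation: it is the \emph{coherence of orientations} in the Seifert smoothing that forces the two reconnecting arcs to sweep opposite fibre angles, so that $z$ bounds; the incoherent smoothing would contribute winding $\pm1$ and change the homology class. I would phrase this as the Reidemeister~II picture suggested by the section title, namely that in a suitable diagram of the lifts the oriented smoothing realizes a Reidemeister~II move supported in $\UT D$ and disjoint from $\vec\gamma_2$, hence neutral for the linking number. A minor but necessary bookkeeping point is to fix the scales of the successive resolutions so that desingularizing $\gamma_2$ genuinely occurs in discs disjoint from $\gamma^S_1$.
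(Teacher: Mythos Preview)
Your argument is correct and follows essentially the same route as the paper: both show that, one double point at a time, $\vec\gamma_i$ and $\vec\gamma_i^S$ are homologous inside the solid torus $\UT D$, which is disjoint from $\vec\gamma_j$, so the linking with $\vec\gamma_j$ is unchanged. The paper phrases this as the existence of a local cobordism in a tubular neighbourhood of the fibre over the double point, while you phrase it as the vanishing of the net winding of the difference cycle $z$ in $\H_1(\UT D)\cong\mZ$; in a solid torus these are equivalent statements, and your version makes explicit the role of the orientation-coherence of the Seifert smoothing that the paper leaves implicit. One small remark: your closing aside about a Reidemeister~II picture is a separate phenomenon (the paper treats it in the next lemma, for moves of $\gamma_2$ across $\gamma_1$), not the mechanism behind the Seifert step itself, so you may want to drop or rephrase that sentence.
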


\begin{proof}
We claim that, for $(i,j)=(1,2)$ or $(2,1)$, there is a cobordism between $\vec\gamma_i$ and~$\vec\gamma_i^S$ that is disjoint of $\vec\gamma_j$, namely a surface whose boundary is $\vec\gamma_i^S -\vec\gamma_i$. 
Indeed, as sets, $\gamma_i$ and $\gamma_i^S$ differ only in a neighborhood of the self-intersection points of $\gamma_i$, these self-intersection points correspond to fibers in $\UT \mS^2$ that intersect $\vec\gamma_i$ at two points, hence the boundary of a tubular neighborhood of the fiber intersects in four points the curve $\vec\gamma_i$. 
The lift $\vec\gamma_i^S$ connects these four points in the other possible way that respects the orientations, the arcs in these connections are contained  in the tubular neighborhood of the fiber.
We have then a local cobordism. 
Locally this cobordism  is disjoint from $\vec\gamma_j$, since there are no intersection point between $\gamma_1$ and $\gamma_2$ in a neighborhood of the self-intersection points of both curves.
\end{proof}

\medskip

We now define an analog of the Reidemeister II move for oriented simple closed curves on $\mS^2$ and we explain how this operation affects the linking number of their lifts to $\UT \mS^2$. 

Suppose that $\gamma_1$ and $\gamma_2$ are two simple non-oriented curves in $\mS^2$, in general position, such that $\gamma_1\cap \gamma_2\neq \emptyset$. 
Consider two intersection points $p_1$ and $p_2$ that are consecutive along both curves, then there is a topological disc $D\subset \mS^2$ bounded by the closed curve given by the segments of $\gamma_1$ and $\gamma_2$ between $p_1$ and $p_2$ that does not contain other intersection points. 

The \emph{Reidemeister II move}, moves one of the curves, say $\gamma_2$, to another curve $\gamma'_2$ that coincides with $\gamma_2$ outside an open neighborhood $U$ of $D$, see Figure~\ref{fig-RII}. 
It reduces the number of intersection points by $2$. For $t\in [0,1]$ denote by $\gamma_2^t$ a continuous 1-parameter family of closed curves achieving a deformation from $\gamma_2$ to~$\gamma'_2$: $\gamma_2^t$ coincides with $\gamma_2$ outside~$U$ for every $t$; $\gamma_2^0=\gamma_2$ and $\gamma_2^1=\gamma'_2$, see Figure~\ref{fig-RII}.

Now assume that $\gamma_1$ and $\gamma_2$ are oriented, the orientation of $\gamma_2$ induces an orientation on $\gamma_2^t$ for every $t$. 
There are two cases for the disc $D$: either the orientations of $\gamma_1$ and $\gamma_2$ make the bigon bounding $D$ oriented, or not. 

\begin{lemma}\label{lemma-RII}
Consider two closed simple multi-curves~$\gamma_1, \gamma_2$ on~$\mS^2$ in general position. 
Assume that $\gamma'_2$ is obtained from $\gamma_2$ by applying a Reidemeister II move with respect to~$\gamma_1$ in a bigon~$D$. 
\begin{itemize}
    \item If the orientations of $\gamma_1$ and $\gamma_2$ make the bigon bounding $D$ oriented, then one has $\Lk(\vec \gamma_1, \vec \gamma'_2)=\Lk(\vec \gamma_1, \vec \gamma_2)$.
    \item Otherwise one has $\Lk(\vec \gamma_1, \vec \gamma'_2)=\Lk(\vec \gamma_1, \vec \gamma_2)+1$.
\end{itemize}
\end{lemma}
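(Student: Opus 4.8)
The plan is to realize the Reidemeister II move as a homotopy in the base and to track the cobordism it traces in $\UT\mS^2$. Lifting the deformation $\gamma_2^t$ pointwise to its tangent directions gives a map $(s,t)\mapsto(\gamma_2^t(s),\dot\gamma_2^t(s))$ whose image is a $2$-chain $\Sigma\subset\UT\mS^2$ with $\partial\Sigma=\vec\gamma_2'-\vec\gamma_2$; outside $U$ the family is constant, so $\Sigma$ is contained in the part of $\UT\mS^2$ lying over $U$. Since $\vec\gamma_1$ is disjoint from both $\vec\gamma_2$ and $\vec\gamma_2'$, bilinearity gives $\Lk(\vec\gamma_1,\vec\gamma_2')-\Lk(\vec\gamma_1,\vec\gamma_2)=\Lk(\vec\gamma_1,\partial\Sigma)$. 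Choosing a surface $S_1$ with $\partial S_1=2\vec\gamma_1$ and using the Stokes-type identity $S_1\cdot\partial\Sigma=\pm\,\partial S_1\cdot\Sigma$, the factor $e=2$ cancels and this difference equals, up to a global sign, the algebraic intersection number $\vec\gamma_1\cdot\Sigma$. First I would make this reduction precise, turning the whole computation into a signed count of intersections of the fixed knot $\vec\gamma_1$ with the homotopy surface $\Sigma$.

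Next I would identify these intersections geometrically. A point of $\vec\gamma_1\cap\Sigma$ requires simultaneously that the base points agree, $\gamma_1(u)=\gamma_2^t(s)$, and that the lifted directions agree as points of the fiber circle, i.e. $\dot\gamma_1(u)$ and $\dot\gamma_2^t(s)$ point the same way. The base-point condition is met exactly along the crossing locus of $\gamma_1$ with the moving curve, and on that locus the directions coincide only where $\gamma_2^t$ is tangent to $\gamma_1$. By general position the Reidemeister II homotopy has a single such tangency, the fold instant at which the two vertices of the bigon merge and annihilate, so $\Sigma$ meets $\vec\gamma_1$ in at most one point. Hence the entire change in linking number is governed by whether that lone tangency lifts to a genuine crossing in $\UT\mS^2$.

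Whether it does is decided by local orientation data, which I would extract from a standard fold model: take $\gamma_1$ to be the oriented $x$-axis and $\gamma_2^t$ a shrinking bump $y=f_t(x)$ sweeping across it. At the tangency $f_t'=0$ the direction of the bump is $\pm(1,0)$ according to whether $\gamma_2$ is traversed left-to-right or right-to-left along the bigon edge, that is, according to whether $\gamma_1$ and $\gamma_2$ orient $\partial D$ incoherently or coherently. When the bigon is oriented the two edge-directions are anti-parallel, the lifted directions land on antipodal points of the fiber, $\Sigma$ misses $\vec\gamma_1$, and $\Lk$ is unchanged; when the bigon is not oriented the directions are parallel, $\Sigma$ crosses $\vec\gamma_1$ once, and $\Lk$ jumps by $\pm1$. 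This reproduces the stated dichotomy.

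The main obstacle is pinning the sign in the non-oriented case to exactly $+1$ rather than $-1$. This is pure orientation bookkeeping: one must propagate the orientation of $\mS^2$, the trigonometric orientation of the fibers, and the concatenated orientation of $\UT\mS^2$ through the parametrization of $\Sigma$, compute the local intersection sign against $\vec\gamma_1$, and reconcile the global normalization with the benchmark $\Lk(\UT p_1,\UT p_2)=-\frac12$ of Lemma~\ref{lemma-linkfibers}. I would fix all conventions once inside the fold chart, verify the sign there, and then invoke the locality of the move, namely that everything occurs inside the lift of $U$ and is disjoint from the rest of both curves, to conclude that this single local contribution is the entire change in the linking number.
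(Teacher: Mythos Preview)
Your approach is essentially the paper's: lift the Reidemeister~II homotopy $(\gamma_2^t)_{t\in[0,1]}$ to a cobordism $\Sigma\subset\UT\mS^2$ between $\vec\gamma_2$ and $\vec\gamma_2'$, observe that $\vec\gamma_1\cap\Sigma$ corresponds to moments where $\gamma_1$ and $\gamma_2^t$ share a tangent direction, and note that such a tangency occurs (once) precisely when the bigon is not coherently oriented. The paper is terser---it simply asserts that the single intersection point in the second case is negative---whereas you flag the sign as the remaining bookkeeping step; both treatments leave that verification at the level of a claim.
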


\begin{figure}[htb]
\includegraphics[width=.7\textwidth]{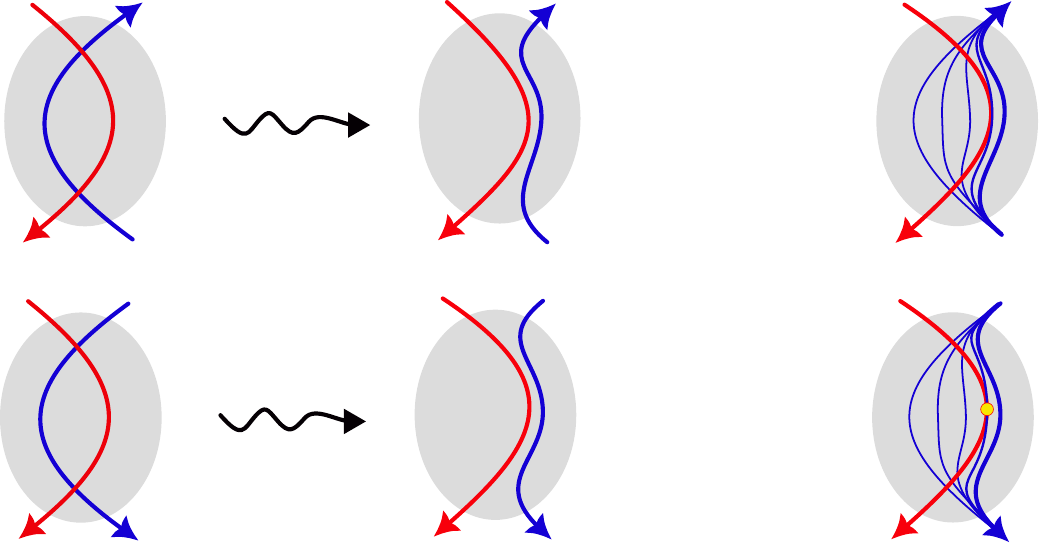}
\caption{The Reidemeister II move on a pair of oriented curves. The move itself does not depend on the orientations of the curves, but the family~$\gamma_2^t$ (on the right) lifts to a surface that intersects or not ~$\gamma_1$, depending on whether the orientations match or not.}
\label{fig-RII}
\end{figure}

\begin{proof}
Consider the curves $\vec \gamma_1$ and $\vec \gamma_2^t$ for $t\in [0,1]$. 

In the first case, all vectors positively tangent to $\gamma_2^t$ are disjoint from the vectors positively tangent to~$\gamma_1$. 
Then they induce a cobordism from $\vec \gamma_2$ to~$\vec \gamma'_2$ that does not intersect~$\vec\gamma_1$. 
Therefore one has $\Lk(\vec \gamma_1, \vec \gamma_2)=\Lk(\vec \gamma_1, \vec \gamma'_2)$. 

In the second case, there is a $t\in (0,1)$ such that $\gamma_1$ and $\gamma^t_2$ have a common tangent vector, meaning that $\vec \gamma_1$ and $\vec \gamma_2^t$ intersect. 
This intersection point is negative, hence $\Lk(\vec \gamma_1, \vec \gamma_2)=\Lk(\vec \gamma_1, \vec \gamma'_2)-1$.
\end{proof}

Lemma~\ref{lemma-RII} yields a procedure to compute the linking number of the lifts of two simple closed curves: starting from the two curves, one iteratively applies Reidemeister II moves to make them disjoint. 
At every step one removes 0 or 1 to their linking number, depending on whether the orientations match. 
In the end there remains two disjoint simple curves, the linking of whose lifts is given by Proposition~\ref{prop-lksimple}

The situation we will encounter is even simpler since no complicated meanders can appear for geodesics on spheres of revolution.


\begin{prop}\label{prop-lksimple2}
Let $\gamma_1$ and $\gamma_2$ be two oriented simple closed curves on~$\mS^2$. 
Assume that $\gamma_1\cap\gamma_2$ consists of $2k$ points with $k>0$. 
Label them in a increasing order $1, 2, \dots, 2k$ when travelling along~$\gamma_1$. 
\begin{itemize}
\item  
If when travelling along $\gamma_2$ one encounters the intersection points in the ordering $1, 2, \dots, 2k$, then one has $\Lk(\vec \gamma_1, \vec \gamma_2)=\frac 12-k$. 
\item If when travelling along $\gamma_2$ one encounters the intersection points in the ordering $2k, 2k-1, \dots, 1$, then one has $\Lk(\vec \gamma_1, \vec \gamma_2)=-\frac 12$.
\end{itemize}
\end{prop}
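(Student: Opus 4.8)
The plan is to make $\gamma_1$ and $\gamma_2$ disjoint by repeatedly applying the Reidemeister~II move of Lemma~\ref{lemma-RII}, keeping track of the linking number at each step and reading off the final value from Proposition~\ref{prop-lksimple}. First I would produce a bigon to which the move applies. In both cases the hypothesis on the orderings says that two cyclically consecutive points $i,i+1$ along $\gamma_1$ are also consecutive along $\gamma_2$: in the increasing case they occur in the same sense, in the decreasing case the pair $\{i,i+1\}$ occurs (cyclically) consecutively but in the opposite sense. In either situation the arc $a_1\subset\gamma_1$ and the arc $a_2\subset\gamma_2$ joining $i$ to $i+1$ contain no other intersection point, so $a_1\cup a_2$ bounds two discs of~$\mS^2$. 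I would check that one of them is an \emph{empty} bigon $D$: since $\gamma_1$ and $\gamma_2$ are simple, the complementary arc $b_1=\gamma_1\setminus a_1$ cannot meet $a_1$, and cannot meet $a_2$ either (that would be an extra intersection point on $a_2$); hence $b_1$, and therefore all of $\gamma_1$, lies in the closure of a single one of the two discs, so the other disc contains none of the remaining $2k-2$ intersection points. This is the disc on which Lemma~\ref{lemma-RII} operates.

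Next I would read the coherence of $D$ off the ordering. Orienting $a_1$ from $i$ to $i+1$, the increasing case orients $a_2$ also from $i$ to $i+1$, so the two boundary arcs run ``the same way'', $D$ is \emph{not} coherently oriented, and by Lemma~\ref{lemma-RII} erasing the pair $\{i,i+1\}$ raises the linking number by $1$. In the decreasing case $a_2$ runs from $i+1$ to $i$, the boundary $i\to i+1\to i$ is coherently oriented, and the move leaves the linking number unchanged. Removing $\{i,i+1\}$ from both curves merely restricts the two cyclic orders, so the configuration stays of the same type (increasing, resp. decreasing) with $2k-2$ points. Iterating $k$ times thus makes $\gamma_1$ and $\gamma_2$ disjoint while increasing the linking number by $k$ (resp. by $0$). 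Writing $L_0\in\{\pm\tfrac12\}$ for the linking of the final disjoint pair supplied by Proposition~\ref{prop-lksimple}, this gives $\Lk(\vec\gamma_1,\vec\gamma_2)=L_0-k$ in the increasing case and $\Lk(\vec\gamma_1,\vec\gamma_2)=L_0$ in the decreasing case.

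The main obstacle is this last step: identifying the sign $L_0$, that is, whether the two disjoint curves produced by the reduction have coinciding orientations, since the earlier lemmas record only relative changes. I expect to settle it by following orientations through the final move: a non-coherent move leaves the two strands locally \emph{parallel}, a coherent move leaves them locally \emph{anti-parallel}, and as the final curves cobound an annulus carrying this local relation all the way around, the increasing reduction ends with coinciding orientations, hence $L_0=+\tfrac12$, while the decreasing reduction ends with opposite orientations, hence $L_0=-\tfrac12$. Substituting into the two formulas yields $\Lk(\vec\gamma_1,\vec\gamma_2)=\tfrac12-k$ and $\Lk(\vec\gamma_1,\vec\gamma_2)=-\tfrac12$, as claimed. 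Equivalently, I may reduce down to $k=1$, where the two orderings coincide (the cyclic order of two points is unique) and a single use of Lemma~\ref{lemma-RII} with Proposition~\ref{prop-lksimple} gives $-\tfrac12$ directly; preservation of the order type then makes the $k-1$ preceding moves contribute $-(k-1)$ in the increasing case and nothing in the decreasing case.
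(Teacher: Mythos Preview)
Your proposal is correct and follows essentially the same strategy as the paper's proof: perform $k$ Reidemeister~II moves to separate the curves, use Lemma~\ref{lemma-RII} to track the change in linking number at each step, and then read off the final value from Proposition~\ref{prop-lksimple}. The paper's argument is considerably terser---it simply asserts that the $k$ moves exist and that the final disjoint pair is parallel (resp.\ anti-parallel)---whereas you have carefully justified the existence of an empty bigon at each stage, the coherence type of each move, and the orientation of the resulting disjoint curves; these are exactly the details the paper leaves to the reader.
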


\begin{proof}
One can use $k$  Reidemeister~II moves to separate the two curves~$\gamma_1$, $\gamma_2$, and obtain two disjoint oriented curves~$\gamma_1, \gamma'_2$. 
When using this move, either the orientations disagree and $\Lk(\gamma_1,\gamma_2)=\Lk(\gamma_1,\gamma_2')=-\frac 12$ or the orientations agree, and this moves adds $k$ to the linking number of the lifts in~$\UT \mS^2$, thus
$$\Lk(\gamma_1,\gamma_2)=\Lk(\gamma_1,\gamma_2')-k=\frac 12-k.$$
\end{proof}

We summarise the linking number of the lifts of two simple closed curves (that do not form complicated meanders) in Figure~\ref{fig-simple}.




\begin{figure}
    \begin{picture}(340,130)(0,0)
    \put(-10,5){\includegraphics[width=1\textwidth]{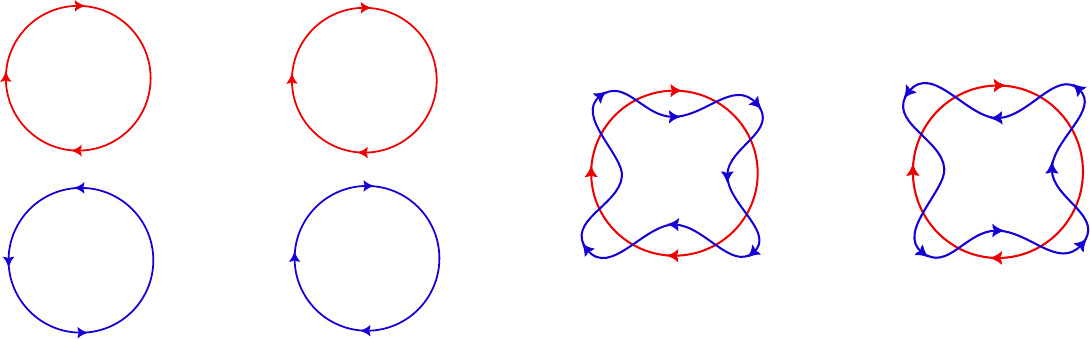}}
    \put(-3,58){$\frac12$}
    \put(83,58){$-\frac12$}
    \put(200,13){$\frac12-k$}
    \put(310,13){$-\frac12$}
    \end{picture}
    \caption{The linking number~$\Lk(\vec \gamma_1, \vec \gamma_2)$ of the lifts of two simple closed curves~$\gamma_1, \gamma_2$ on~$\mS^2$. 
On the left, when the curve do not intersect, they can be parallel (linking ~$+\frac12$) or anti-parallel (linking~$-\frac12$). 
On the right, when the curves intersect, they can be made disjoint using $k$ Reidemeister II moves. These moves alter the linking number when the resulting curves are parallel, and do not alter it when the curves are anti-parallel.
}
    \label{fig-simple}
\end{figure}

\section{Positively curved surfaces of revolution and their geodesic flows}\label{sec-revolution}

We now recall some properties of surfaces of revolution and of their geodesic flows. 

\subsection{Surfaces of revolution, their geodesics and Clairaut's relation}\label{ss-clairaut}
Let $a<b$ be two real numbers. 
Consider two $C^2$ functions $f,g:[a,b]\to\mR$ such that $f(v)>0$ for all $v\in (a,b)$ and $f(a)=f(b)=0$. 
Consider the surface of revolution $S_{f,g}$ in $\mR^3$ defined as 
$$S_{f,g}:=\{(f(v)\cos u, f(v)\sin u, g(v)); \quad v\in [a,b] \quad \mbox{and} \quad 0\leq u<2\pi\}.$$
Note that $f$ gives the distance of a point to the axis of revolution. 
The Gaussian curvature of $S_{f,g}$ at a point is given by 
$$K(u,v)=-\frac{g'(g'f''-g''f')}{f((f')^2+(g')^2)^2},$$
and, when $f'^2+g'^2=1$, we get $K(u,v)=-\frac{f''}{f}$. In this setting, the curvature is strictly positive if $f''(v)<0$ for all $v\in(a,b)$.

A curve on $S_{f,g}$ corresponds to a curve in the $(u,v)$-plane that can be parametrised as $(u(t),v(t))$. Denoting by $u'$ and $v'$ the derivatives of these functions with respect to $t$, the curve is geodesic if and only if it satisfies:
\begin{eqnarray}\label{eq-geodesic1}
u''+\frac{2ff'}{f^2}u'v' &=0\label{eq-geodesic2}\\
v''-\frac{ff'}{(f')^2+(g')^2}(u')^2+\frac{f'f''+g'g''}{(f')^2+(g')^2}(v')^2 &=0.
\end{eqnarray}

If $u(t)$ is a constant function, that is, we consider a meridian on $S_{f,g}$, the first equation is trivially satisfied and the second equation can be deduced from the first fundamental form of the surface (see page 161 of \cite{DoCarmo}). 
Hence, meridians are geodesics.

Now if $v(t)$ is a constant function, the first equation gives $u''=0$, hence $u'$ is a non-zero constant function, while the second equation gives
$$-\frac{ff'}{(f')^2+(g')^2}(u')^2=0.$$
This implies $ff'=0$ and hence if $f\neq0$ then $f'=0$. 
Therefore a parallel is a geodesic if and only if it corresponds to a critical point of~$f$.

Now observe that Equation~ \eqref{eq-geodesic1} can be written as
$$(f^2u')'=f^2u''+2ff'u'v'=
0,$$
hence $f^2u'$ is constant. 
On the other hand, if at a certain point a geodesic forms an angle $\theta$, $-\pi<\theta\leq \pi$ with respect to the parallel (oriented by $u$) at the given point,  then
$$\cos \theta=\frac{\langle x_u,x_uu'+x_vv'\rangle}{|x_u|}=fu'$$
where $x_u=(-f(v)\sin u, f(v)\cos u, 0)$ and $x_v=(f'(v)\cos u, f'(v)\sin u, g'(v))$ form a basis for the tangent space of $S$. 
Since $f$ is the radius of the parallel, we obtain Clairaut's relation:
$f\cos\theta$ is constant along a geodesic.

\subsection{Invariant tori and invariant measures for spheres of revolution}\label{ss-measures}

We start with a sphere of revolution~$S_{f,g}$ as in the previous section. 
It consists of those points of the form~$p(u,v)=(f(v)\cos u, f(v) \sin u, g(v))$ with $v\in[a,b]$ and $u\in[0,2\pi)$. 
We call $u$ the argument of $p$ and $f$ the radius of~$p$, that is, the distance from $p$ to the axis of revolution. 
For $(p,w)\in\UT S$ and $p$ not a pole, define 
$\theta(w)\in (-\pi,\pi]$ as the angle between~$w$ and the tangent vector to the (positive) parallel. 
For $\alpha\in\mR$, define 
$$T_\alpha:=\{(p,w)\in\UT S\,|\, f(p)\cos(\theta(w))=\alpha\}.$$ 
Clairaut's relation implies that, for every $\alpha$, $T_\alpha$ is invariant by the geodesic flow. 

We now assume that the curvature of $S_{f,g}$ is strictly positive, so that $f'$ can be zero only once. 
Thus there is only one geodesic that lies on a plane orthogonal to the axis of revolution. 
We call it the \emph{equator}, denoted by~$e$. 
We also denote by $e_+$ the oriented geodesic obtained by following the equator in the positive direction ({\it i.e.,} with $u$ increasing), and by $e_-$ the oriented geodesic obtained by following the equator in the negative direction. 
Observe that $S\setminus \{e\}$ has two components, the north and south hemisphere. 
We denote by $r_e$ the radius of~$e$. 
For $\alpha=\pm r_{e}$, the set $T_\alpha$ is a circle that coincides with one of the two lifts of~$e$, namely the orbits~$\vec e_+$ and~$\vec e_-$ of the geodesic flow.
For $\alpha\in(-r_e, r_{e})$, $T_\alpha$ is a torus, foliated by the orbits of the geodesic flow. 

Denote by~$\ell_\alpha$ the parallel in~$T_\alpha$ of the form
$$\{(p,w)\in\UT S; f(p)=\alpha, \cos(\theta(w))=1\}$$ and by $m_\alpha$ the oriented meridian in~$T_\alpha$ of the form
$$\{(p,w)\in\UT S; u(p)=\mathrm{cst}, f(p)\cos(\theta(w))=\alpha\}.$$ 
The pair $(\ell_\alpha, m_\alpha)$ forms a basis of the first homology group of~$T_\alpha$. 

The restriction of the geodesic flow to~$T_\alpha$ is invariant by rotation in the longitudinal direction. 
When integrating along an invariant measure, the geodesic flow has an asymptotic cycle of the form $(x_\alpha, y_\alpha)$ in the basis~$(\ell_\alpha, m_\alpha)$. 
The invariance by rotation implies in particular that $(x_\alpha, y_\alpha)$ does not depend on the chosen invariant measure on~$T_\alpha$. 
Geometrically, $x_\alpha$ is the inverse of the period of the flow in the longitudinal direction and it has the sign of~$\alpha$. 
Also, $y_\alpha/x_\alpha$ is the average number of times the corresponding geodesic crosses the equator positively over one longitudinal period. 

Note that $T_\alpha$ is foliated by periodic orbits if and only the ratio $y_\alpha/x_\alpha$ is rational.
If $y_\alpha/x_\alpha$ is irrational, the restriction of the flow to~$T_\alpha$ is conjugated to a linear flow. 
In this case, for $p\in T_\alpha$ and~$t$ large enough, 
the curve $k_\varphi(p,t)$ is a torus knot on~$T_\alpha$ of type $(p_\alpha^t,q_\alpha^t)$, where $q_\alpha^t/p_\alpha^t$ is a rational approximation of~$y_\alpha/x_\alpha$. 

We can now explain why, in this context, the linking of two ergodic invariant measures exists.  

\begin{proof}[Proof of Lemma~\ref{lemma_linkmeasures}]
Assume that $\mu, \nu$ are two ergodic $\varphi^t$-invariant measures, supported by~$T_\alpha, T_\beta$ respectively. 
For $x$ a $\mu$-regular point and $x'$ a $\nu$-regular point, and for sequences $t_n, t'_n$ such that $\varphi^{t_n}(x)\to x$ and $\varphi^{t'_n}(x')\to x'$, the knots $k_\varphi(x, t_n)$ and $k_\varphi(x', t'_n)$ lie in (a neighborhood of) $T_\alpha$ and $T_\beta$ respectively. 
Since $\mu$ and $\nu$ do not charge the same periodic orbit, the points $x$ and $x'$ can be chosen so that they belong to different orbits. This implies that the knots $k_\varphi(x, t_n)$ and $k_\varphi(x', t'_n)$ are disjoint and form a link.

The classes of $k_\varphi(x, t_n)$ and $k_\varphi(x', t'_n)$ in the first homology groups of the corresponding tori are roughly $t_n(x_\alpha, y_\alpha)$ and $t'_n(x_\beta, y_\beta)$. 
Therefore $\frac1{t_nt'_n}\Lk(k_\varphi(x, t_n), k_\varphi(x', t'_n))$ is roughly the linking of the curves $(x_\alpha, y_\alpha)$ on $T_\alpha$ and $(x_\beta, y_\beta)$ on~$T_\beta$. 
This concludes the proof. 
\end{proof}

\section{Left-handedness for positively curved spheres}

We put together the ingredients of the two previous sections to estimate the linking number of any two ergodic invariant measures, and then prove the main results. 

\subsection{Reduction to linking with the equator}

We show that for a positively curved sphere of revolution, left-handedness can be checked by computing the linking with the equator only. 
We use the notations of Section~\ref{sec-revolution}. 

For $\beta>0$ and $\alpha<\beta$, the two invariant tori $T_\alpha, T_\beta$ are disjoint. 
Let $\mu$ be an ergodic measure whose support is contained in $T_\alpha$. 
A curve on~$T_\beta$ may be homotoped onto a curve on~$T_{r_e}$ in the complement of~$T_\alpha$. 
More precisely, if the curve has slope type $(x_\beta,y_\beta)$ in the basis~$(\ell_\beta, m_\beta)$, it may be homotoped onto $x_\beta$ times the curve $T_{r_e}=\vec e_+$ in the complement of~$T_\alpha$. 
Therefore the linking of two invariant measures $\mu, \nu$ supported by~$T_\alpha, T_\beta$ has the same sign as $\Lk^\varphi(\mu, \vec e_+)$, where we abuse of the notation and denote the ergodic invariant measure supported by $\vec{e}_+$ with the same sign. 

By a similar argument, if $\beta<0$ and $\alpha>\beta$, the linking of two invariant measures $\mu, \nu$ supported by~$T_\alpha, T_\beta$ has the same sign as $\Lk^\varphi(\mu, \vec e_-)$. 
Thus we get

\begin{lemma}\label{lemma-LkEquator}
The geodesic flow on a sphere of revolution~$S$ is left-handed if and only if there exists $a>0$ such that for every closed geodesic~$\gamma$ on~$S$, one has $\frac1{\len(\gamma)}\Lk(\vec e_+, \vec \gamma)<-a$ and $\frac1{\len(\gamma)}\Lk(\vec e_-, \vec \gamma)<-a$. 
\end{lemma}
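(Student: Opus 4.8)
The plan is to view the dynamical linking with an equatorial orbit as a function of the single parameter $\alpha$ labelling the invariant torus $T_\alpha$, and then to extract the uniform constant $a$ from continuity of this function on the compact interval $[-r_e,r_e]$. The sign reduction established just above already shows that the geodesic flow is left-handed if and only if $\Lk^\varphi(\mu_\alpha,\vec e_+)<0$ for every $\alpha$ and $\Lk^\varphi(\mu_\alpha,\vec e_-)<0$ for every $\alpha$, where $\mu_\alpha$ denotes any ergodic measure supported on $T_\alpha$: any pair $\mu,\nu$ supported on two distinct tori $T_\alpha,T_\beta$ has linking of the same sign as the linking of one of them with $\vec e_+$ or $\vec e_-$, and a pair supported on a single torus is handled by pushing one of the two orbits to a neighbouring torus and invoking the same reduction. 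Since the rotational invariance of the flow forces the asymptotic cycle $(x_\alpha,y_\alpha)$, and hence $\Lk^\varphi(\mu_\alpha,\vec e_\pm)$, to depend only on $\alpha$, I set $L_\pm(\alpha):=\Lk^\varphi(\mu_\alpha,\vec e_\pm)$ and study these two functions on $\alpha\in[-r_e,r_e]$.

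The bridge between the dynamical linking and the topological quantity in the statement is the following: when $\mu_\alpha$ is the invariant probability measure carried by a closed geodesic $\gamma\subset T_\alpha$, unwinding the definition of the dynamical linking of a pair of periodic orbits gives $\Lk^\varphi(\mu_\alpha,\vec e_\pm)=\frac1{\len(\gamma)\,\len(e_\pm)}\Lk(\vec\gamma,\vec e_\pm)$, the curves $k_\varphi$ being homologous to multiples of $\gamma$ and of $\vec e_\pm$ for large times. As $\len(e_\pm)$ is a fixed positive constant, a uniform bound $\Lk^\varphi(\mu_\alpha,\vec e_\pm)<-a'$ is equivalent to the bound $\frac1{\len(\gamma)}\Lk(\vec e_\pm,\vec\gamma)<-a$ of the statement, after rescaling the constant. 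For a torus carrying an irrational flow, $\mu_\alpha$ is uniquely ergodic and is a weak-$*$ limit of measures carried by closed geodesics on nearby rational tori; identifying $\Lk^\varphi(\mu_\alpha,\vec e_\pm)$ with $\frac1{\len(e_\pm)}$ times the Schwartzman asymptotic cycle of $\mu_\alpha$ in the blow-up $M_{e_\pm}$ shows this linking is the corresponding limit.

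For the direction asserting that the uniform bound implies left-handedness, I assume $\frac1{\len(\gamma)}\Lk(\vec e_\pm,\vec\gamma)<-a$ for all closed geodesics $\gamma$. The displayed formula gives $L_\pm(\alpha)<0$ on every torus foliated by closed geodesics, and passing to the weak-$*$ limit along rational tori yields $L_\pm(\alpha)\le -a/\len(e_\pm)<0$ on the irrational tori as well; the finitely many degenerate cases, namely the mutual and self-linkings of $\vec e_+$ and $\vec e_-$, are negative by Proposition~\ref{prop-lksimple} and the blow-up description of the self-linking. By the sign reduction this makes every pair of ergodic invariant measures link negatively, so the flow is left-handed.

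For the converse I argue by contraposition on the absence of a uniform bound. If no $a>0$ works, there is a sequence of closed geodesics $\gamma_n$, supported on tori $T_{\alpha_n}$, with $\frac1{\len(\gamma_n)}\Lk(\vec e_+,\vec\gamma_n)\to 0^-$ (or the analogous statement for $e_-$); by left-handedness every term is strictly negative, so the limit is exactly $0$. Passing to a subsequence, $\alpha_n\to\alpha_\infty\in[-r_e,r_e]$, and continuity of $L_+$ forces $L_+(\alpha_\infty)=0$, contradicting left-handedness. Equivalently, once $L_\pm$ is continuous on the compact interval $[-r_e,r_e]$ and left-handedness gives $L_\pm<0$ pointwise, the maximum of $L_\pm$ is attained and is strictly negative, which is exactly the uniform bound. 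The main obstacle is therefore the continuity of $\alpha\mapsto L_\pm(\alpha)$ up to and including the degenerate endpoints $\alpha=\pm r_e$, where $T_\alpha$ collapses onto $\vec e_\pm$: one must check that the asymptotic cycle of $\mu_\alpha$ in $M_{e_\pm}$ varies continuously as the invariant tori sweep toward the equatorial orbit, and that its limit matches the self-linking defined through the Seifert framing on the boundary torus $N^1_{e_\pm}$. This continuity rests on the smooth dependence of $(x_\alpha,y_\alpha)$ on $\alpha$ coming from the Clairaut integrals, together with continuity of Schwartzman cycles in the weak-$*$ topology.
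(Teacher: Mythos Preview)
Your proof is correct and follows essentially the same approach as the paper: both directions use the reduction to linking with the equator established just before the lemma, together with weak-$*$ continuity of the linking to pass between closed geodesics and general invariant measures, and both handle the self-linking of~$\vec e_\pm$ by approximation through measures on nearby tori. Your explicit parametrization by~$\alpha\in[-r_e,r_e]$ and the function~$L_\pm$ is a cosmetic repackaging of the paper's direct use of weak-$*$ compactness of invariant measures.
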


\begin{proof}
First assume that the flow is left-handed. 
By Definition~\ref{defn-lefthanded} this means that the linking of any two invariant measures is negative. 
Assume for a contradiction that there is no positive~$a$ satisfying the statement. 
Then, up to reversing their orientations, there is a sequence of oriented periodic geodesics~$\gamma_n$ such that $\frac1{\len(\gamma_n)}\Lk(\vec e_+, \vec \gamma_n)$ tends to~$0$. 
Recall from Section~\ref{sec: linkmeasures} that $\Lk(\vec e_+, \cdot)$ is continuous in the weak-* topology. 
By extracting a subsequence we find a invariant measure whose linking with $\vec e_+$ is zero, a contradiction.     

We prove the converse implication. 
Let $\mu$ and $\nu$ be two ergodic measures of the geodesic flow of $S$. 
Assume first that $\mu$ and $\nu$ do not charge the same periodic orbit. 
Then the above discussion implies that $\Lk^\varphi(\mu, \nu)$ equals the linking of either $\mu$ or $\nu$ with either $\vec e_+$ or $\vec e_-$, hence it is smaller than $-a$.

Now assume that $\mu=\nu$ is a measure supported by a periodic orbit $\vec \gamma$ of the geodesic flow. Let $\alpha\in (-r_e, r_e)$ be such that $\vec \gamma\subset T_\alpha$, then $y_\alpha/x_\alpha$ is a rational number and we can choose a sequence $\beta_n$ of numbers in $(-r_e,r_e)$ such that $y_{\beta_n}/x_{\beta_n}\neq y_\alpha/x_\alpha$ for all $n$ and forms a monotone sequence that converges to $y_\alpha/x_\alpha$. Consider the manifold $\UT S_{\vec \gamma}$ obtained by blowing up $\vec \gamma$ and an ergodic  measure $\mu^\circ$ of the induced flow, whose support is contained in $\partial \UT S_{\vec \gamma}$ and is the limit of measures $\nu_n$ supported in the tori $T_{\beta_n}$ (observe that the flow in $\UT S_{\vec \gamma}$ leaves these tori invariant, with the exception of $T_\alpha$). Then
$$\Lk^{\varphi}(\mu, \mu)= \lim_{n\to \infty} \Lk^\varphi(\nu_n,\mu^\circ)\leq -a<0.$$
We conclude that the flow is left-handed. 
\end{proof}

\subsection{Linking with the equator and aysmptotic figure-eights}

In what follows, we explain why, in the case of spheres of revolution, the geodesic flow is left-handed if and only if there are no asymptotic figure eight geodesics (Proposition~\ref{prop:figure-eight}). 


As before, $S$ denotes a sphere of revolution with positive curvature. 
Its equator $e$ lifts to two closed orbits of the geodesic flow~$\vec e_+$ and $\vec e_-$. 
Consider an oriented geodesic $\gamma$ that is different from the equator. 
Then $\gamma$ intersects $e_+$ and $e_-$. 
Denote by $\theta_\gamma\in (0,\pi)$ the angle between $\gamma$ and $e_+$ at a point in which $\gamma$ crosses $e_+$ from the south to the north hemisphere. 
Clairaut's identity implies that at each crossing with the equator the angle between the geodesic and $e_+$ is equal to $\pm\theta_\gamma$. 

Firstly, the intersection between $e_+$ and $\gamma$ has an even number of points that is denote by $2q_\gamma$ (here the orientation of $\gamma$ is not important). 
Secondly, $\gamma$ makes a certain number of turns around the rotation axis that we call $p_\gamma$. 
We call the pair $(p_\gamma, q_\gamma)$ the \emph{type} of~$\gamma$. 
Observe that $p_\gamma\in \mZ$ and the sign is determined by the trigonometric direction in the $xy$-plane or, equivalently, by the sign of~$\cos(\theta_\gamma)$. 
Moreover, $p_\gamma=0$ implies that $\gamma$ is a meridian, then $q_\gamma=1$ and $\theta_\gamma=\pi/2$.

With the notations of Section~\ref{sec-revolution}, the periodic orbit~$\vec\gamma$ lies in the invariant torus $T_\alpha$ with $\alpha=r_e\cos(\theta_\gamma)$. 
The latter is foliated by periodic orbits all parallel to~$\vec\gamma$. 
In the $(\ell_\alpha, m_\alpha)$-basis, they have a rational direction $(x_\alpha, y_\alpha)$ which is colinear with $(p_\gamma, q_\gamma)$.

\begin{prop}\label{prop-linke}
The linking number between $\vec{e}_+$ and $\vec{e}_-$ equals $-1/2$. 

Let $\gamma$ be a closed oriented geodesic of $S$ of type $(p_\gamma,q_\gamma)\in \mZ^2$ with $q_\gamma\geq 1$.
\begin{enumerate}
\item If $\theta_\gamma\in (0,\pi/2)$ or equivalently $p_\gamma>0$, then
$$\Lk(\vec e_+,\vec \gamma)=\frac {p_\gamma}2-q_\gamma, \qquad \mbox{and} \qquad \Lk(\vec e_-,\vec \gamma)=-\frac {p_\gamma}2.$$
\item If $\theta_\gamma\in (\pi/2,\pi)$ or equivalently $p_\gamma<0$, then
$$\Lk(\vec e_+,\vec \gamma)=-\frac{|p_\gamma|}{2}, \qquad \mbox{and} \qquad \Lk(\vec e_-,\vec \gamma)=\frac{|p_\gamma|}{2}-q_\gamma.$$
\item If $\theta_\gamma=\pi/2$ or $p=_\gamma0$ we have
$$\Lk(\vec e_+,\vec \gamma)=\Lk(\vec e_-,\vec \gamma)=-\frac 12.$$
\end{enumerate}
\end{prop}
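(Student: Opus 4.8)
The plan is to reduce everything to the two topological tools already established for lifts of curves on $\mS^2$: the linking of two \emph{disjoint} simple curves (Proposition~\ref{prop-lksimple}) and the crossing count for two simple closed curves meeting with a prescribed order (Proposition~\ref{prop-lksimple2}), together with the invariance of linking under Seifert desingularization (Section~\ref{ss-linkinglift}). First I would dispose of $\Lk(\vec e_+,\vec e_-)$: the two lifts share the same base curve but point in opposite fibre directions, hence are disjoint in $\UT S$. Pushing the base of $\vec e_-$ to a nearby parallel yields two disjoint simple closed curves on $\mS^2$ carrying opposite (anti-parallel) orientations, so Proposition~\ref{prop-lksimple} gives $\Lk(\vec e_+,\vec e_-)=-\tfrac12$.

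The heart of the argument is the geodesic case, and its key input is Clairaut's relation. Along $\gamma$ one has $f\cos\theta\equiv\alpha$, so $\cos\theta=\alpha/f$ keeps the constant sign of $\alpha=r_e\cos\theta_\gamma$. Consequently, for $\theta_\gamma\in(0,\pi/2)$ (that is, $p_\gamma>0$) the argument $u$ is \emph{strictly increasing} along $\gamma$; for $\theta_\gamma\in(\pi/2,\pi)$ it is strictly decreasing; and for $\theta_\gamma=\pi/2$ the geodesic is a meridian. This monotonicity is what pins down all the combinatorics.

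Treat $p_\gamma>0$ first. I would Seifert-desingularize $\gamma$ into a simple multicurve $\gamma^S$. Since the resolution reconnects two strands that both run in the $+u$ direction, every component of $\gamma^S$ again has $u$ strictly increasing; an embedded closed curve along which $u$ strictly increases must wind exactly once around the axis, so each component is a parallel co-oriented with $e_+$, and the total winding $p_\gamma$ (preserved by desingularization) forces there to be exactly $p_\gamma$ of them. By the desingularization lemma $\Lk(\vec e_\pm,\vec\gamma)=\Lk(\vec e_\pm,\vec\gamma^S)$, and by bilinearity it suffices to treat each component $C_i$ separately. Writing $2k_i$ for the number of points of $C_i\cap e$, one has $\sum_i k_i=q_\gamma$, and monotonicity of $u$ forces the crossings to occur in the same cyclic order along $C_i$ as along $e_+$ (hence the reversed order along $e_-$). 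Proposition~\ref{prop-lksimple2} then gives $\Lk(\vec e_+,\vec C_i)=\tfrac12-k_i$ and $\Lk(\vec e_-,\vec C_i)=-\tfrac12$ (the cases $k_i=0$ being covered by Proposition~\ref{prop-lksimple}), and summing over the $p_\gamma$ components yields exactly $\Lk(\vec e_+,\vec\gamma)=\tfrac{p_\gamma}{2}-q_\gamma$ and $\Lk(\vec e_-,\vec\gamma)=-\tfrac{p_\gamma}{2}$. The case $p_\gamma<0$ is identical after exchanging the roles of $e_+$ and $e_-$ (now $u$ decreases and $\gamma^S$ is $|p_\gamma|$ parallels co-oriented with $e_-$), and for the meridian $p_\gamma=0$ one has $q_\gamma=1$ with $\gamma$ already simple and meeting $e$ in two transverse points; both admissible orderings in Proposition~\ref{prop-lksimple2} return $-\tfrac12$, giving $\Lk(\vec e_+,\vec\gamma)=\Lk(\vec e_-,\vec\gamma)=-\tfrac12$ without any case distinction.

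The main obstacle is the structural claim about the desingularization: that the strict monotonicity of $u$ coming from Clairaut is inherited by each Seifert component and forces it to be an embedded once-winding parallel co-oriented with $e_+$, with the $e$-crossings distributed among the components in monotone cyclic order. This is the one place where the geometry (Clairaut) and the topology (behaviour of embedded monotone curves on the annulus $\mS^2$ minus the poles) must be combined with care; once it is in place, the three formulas follow from Propositions~\ref{prop-lksimple} and~\ref{prop-lksimple2} by pure bookkeeping.
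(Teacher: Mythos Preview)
Your proof is correct and follows essentially the same route as the paper: reduce $\Lk(\vec e_+,\vec e_-)$ to the disjoint anti-parallel case via a push, then for a general geodesic apply Seifert desingularization to~$\gamma$, use Clairaut to see that each component of~$\gamma^S$ winds once in the $+u$ direction (hence there are $p_\gamma$ of them), and feed each component into Propositions~\ref{prop-lksimple}/\ref{prop-lksimple2}. The one minor difference is bookkeeping: the paper first pushes $e_+$ slightly into the northern hemisphere and asserts that after desingularization exactly one component of~$\gamma^S$ carries all $2q_\gamma$ crossings with the pushed equator (the remaining $p_\gamma-1$ being disjoint parallels), whereas you allow the crossings to be spread over the components as $2k_i$ with $\sum k_i=q_\gamma$ and sum $\sum(\tfrac12-k_i)=\tfrac{p_\gamma}{2}-q_\gamma$. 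Your version sidesteps having to justify why a single component collects all the crossings, at the small cost of checking that the monotone cyclic-order hypothesis of Proposition~\ref{prop-lksimple2} holds for each $C_i$ (which, as you note, is forced by the inherited monotonicity of~$u$).
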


\begin{proof}
In $\UT S$, we can push $\vec e_-$, in the complement of $\vec e_+$, to a curve $\sigma_-$ so that the projection to $S$ of $\sigma_-$ lies in the north hemisphere of $S$. Then we obtain two disjoint oriented simple closed curves with opposite orientations as in the first case of Proposition~\ref{prop-lksimple}, hence $\Lk(\vec e_+,\vec e_-)=-1/2$.

Now assume $\theta_\gamma\in (0,\pi/2)$ or, equivalently, $p_\gamma>0$. 
Since $\gamma$ is a geodesic, its self-intersection points are transverse as well as the intersection points between $\gamma$ and $e_+$. 
Following the ideas at the end of the Section~\ref{ss-linkinglift}, we first move $e_+$ up to the north hemisphere, if needed, so that $e_+$ and $\gamma$ are in general position: that is to avoid having self-intersections points of $\gamma$ coincide with intersection points between $\gamma$ and $e_+$. 
In $\UT S$ we can move $\vec e_+$ in the complement of $\vec \gamma$.

Then we apply Seifert's smoothing to $\gamma$ to obtain a collection of oriented simple closed curves $\gamma^S$, as in Section~\ref{ss-linkinglift}. 
The multi-curve $\gamma^S$ has $p_\gamma$ connected components and only one intersects $e_+$, along $2q_\gamma$ points as in Figure~\ref{F:Seifert}. 
Applying Proposition~\ref{prop-lksimple2} we obtain
$$\Lk(\vec e_+, \vec \gamma)=\Lk(\vec e_+,\vec \gamma^S)
=\frac{(p_\gamma-1)}{2}+\left(\frac 12-q_\gamma\right)=\frac{p_\gamma}{2}-q_\gamma;$$
and $\Lk(\vec e_-, \vec \gamma)=-\frac{p_\gamma}{2}$.

The case $\theta_\gamma\in (\pi/2,\pi)$ or $p_\gamma<0$ is analogous, except that $\gamma$ turns in the direction of $e_-$.
Changing the roles of $e_+$ and $e_-$ we deduce the formulas. 

If $\theta_\gamma=\pi/2$, $\gamma$ is an oriented simple close curve in $S$ and  we can directly apply Proposition~\ref{prop-lksimple2}.
\end{proof}

\begin{figure}
\includegraphics[width=.7\textwidth]{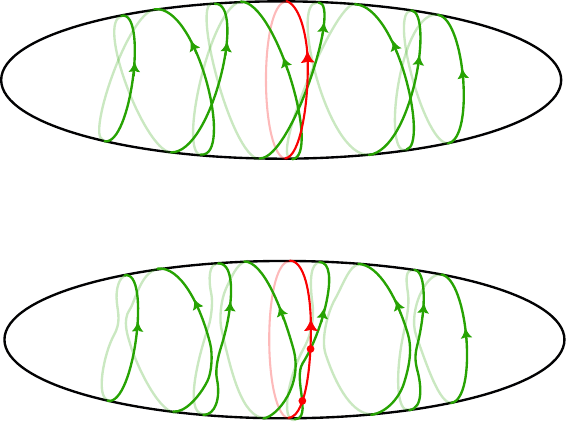}
\caption{An orbit~$\gamma$ of the geodesic flow of type $(p_\gamma, q_\gamma)=(8,1)$ (top, green). 
Its Seifert desingularization (bottom, green) consists of $p_\gamma=8$ parallels circles, $p_\gamma-1=7$ of which have linking $+\frac12$ with the oriented equator $\vec e_+$ (red) and one of which has linking $+\frac12-q_\gamma=-\frac12$ with it. Thus the linking number between the green and red curves is equal to 3. }
\label{F:Seifert}
\end{figure}

\begin{rmk}
Using the same ideas, we can compute the linking number between two arbitrary closed oriented geodesics of $S$. 
For this we deform one geodesic into a cover of the equator as in the discussion in Section~\ref{ss-measures}. 
We do not need these formulas in the sequel, but we include them without proof for the interested reader:

Let  $\gamma_1$ and $\gamma_2$ be two closed oriented geodesics of types $(p_1, q_1)$ and $(p_2,q_2)$ respectively. 
\begin{enumerate}
\item If $p_1p_2\neq0$, assume that $|\pi/2-\theta_1|\geq |\pi/2-\theta_2|$, then
$$
\Lk(\vec \gamma_1, \vec \gamma_2)=\left\{\begin{matrix}
|p_1|\left(\frac{|p_2|}{2}-q_2\right) & p_1p_2>0\\
-\frac{|p_1p_2|}{2} & p_1p_2<0
\end{matrix}\right.$$
\item If $p_1=p_2=0$, then $\Lk(\vec \gamma_1, \vec \gamma_2)=-\frac 12$.
\item If $p_2=0$ and $p_1\neq 0$, then $\Lk(\vec \gamma_1, \vec \gamma_2)=-\frac{|p_1|}{2}$.
\end{enumerate}
\end{rmk}



\bigskip 

Observe that all linking numbers in Proposition~\ref{prop-linke} are strictly negative if $q_\gamma-\frac{|p_\gamma|}{2}>0$ for every oriented closed geodesic~$\gamma$. 
The condition is equivalent to $\frac{|p_\gamma|}{q_\gamma}<2$. 

\begin{defn}\label{defn:fig8}
A closed geodesic of type $(\pm 2,1)$ is a \emph{figure eight geodesic}.

We say that $S$ has \emph{no asymptotic figure-eight geodesic} if \[\sup_{\gamma~\mathrm{closed}}\left(\frac{|p_\gamma|}{q_\gamma}\right)<2.\]
\end{defn}

Let $\gamma$ be  a closed geodesic. 
Observe that $q_\gamma$ is equal to the number of segments of $\gamma$ that are contained in the norther hemisphere (or southern hemisphere). 
The hypothesis $\sup\left(\frac{|p_\gamma|}{q_\gamma}\right)<2$ is then equivalent to saying that there is a strictly positive lower bound to the distance between the endpoints of any segment of $\gamma$ contained in a hemisphere and that this bound is independent of $\gamma$.

Proposition~\ref{prop-linke} implies that the linking number between $\vec e_\pm$ and one of the possible orientations of a figure eight geodesic is zero. 
Hence a figure eight geodesic is an obstruction to left-handedness. 




We can now prove the following result stated in the introduction that summarizes the above discussion.

\newtheorem*{prop:figure-eight}{\bf Proposition \ref{prop:figure-eight}}
\begin{prop:figure-eight}
Let $S$ be a sphere of revolution in~$\mR^3$, then the geodesic flow on~$\UT S$ is left-handed if and only if there is no asymptotic figure-eight closed geodesic on~$S$.
\end{prop:figure-eight}

\begin{proof}
By Lemma~\ref{lemma-LkEquator} the flow is left-handed if and only if there exists $a>0$ such that for every closed geodesic $\gamma$,
$$\frac1{\len(\gamma)}\Lk(\vec e_+, \vec \gamma)<-a \qquad \mbox{and} \qquad \frac1{\len(\gamma)}\Lk(\vec e_-, \vec \gamma)<-a.$$
Denote by $(p_\gamma,q_\gamma)$ the type of~$\gamma$, as defined in the beginning of this section. 
Assume $p_\gamma>0$, the  case $p_\gamma<0$ being analogous. Proposition~\ref{prop-linke} implies 
$$\frac1{\len(\gamma)}\Lk(\vec e_+, \vec \gamma)
=\frac1{\len(\gamma)}\left(\frac{p_\gamma}{2}-q_\gamma\right)<-a$$
and 
$$\frac1{\len(\gamma)}\Lk(\vec e_1, \vec \gamma)
=\frac1{\len(\gamma)}\left(-\frac{p_\gamma}{2}\right)<-a.$$
Since $q_\gamma\geq 1$, these inequalities are equivalent to
$$\frac{2a}{q_\gamma}\len(\gamma)<\frac {p_\gamma} {q_\gamma}<2-\frac{2a}{q_\gamma}\len(\gamma).$$
Taking into account the cases $p_\gamma=0$ and $p_\gamma<0$, we get that the flow is left-handed if and only if there exists $a>0$ such that
$$0\leq \frac{|p_\gamma|}{q_\gamma}<2-\frac{2a}{q_\gamma}\len(\gamma).$$
Since $a>0$, this is equivalent to $\sup\left(\frac{|p_\gamma|}{q_\gamma}\right)<2$ among closed geodesics.
\end{proof}

\subsection{Proof of the first part of Theorem~\ref{thm-revolution}.}
In this section, we prove that if $\delta>1/4$, the geodesic flow is left-handed. 
It follows from a result of Abbondandolo, Bramham, Hryniewicz and  Salom\~{a}o \cite{ABHS}, that is a consequence of a result by Toponogov.

Let $S$ be a sphere of revolution satisfying $\delta(S)>1/4$. 
Consider an oriented geodesic $\gamma$ different from $e$. 
We parametrize $\gamma$ in such a way that $\gamma(0)\in e$. 
Let $t_1>0$ be the first point at which $\gamma(t_1)\in e $. 
We now copy

\begin{lemma}[Lemma 3.9 in \cite{ABHS}]\label{lemma2.9}
Assume that $S$ is a sphere endowed with a Riemannian metric such that $\delta>1/4$. Then the geodesic arc $\gamma(t)$ for $t\in[0,t_1]$ is injective.
\end{lemma}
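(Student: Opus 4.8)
The plan is to argue by contradiction, treating this as a pure comparison-geometry statement in which the constant $1/4$ enters as the threshold governing the injectivity radius. After rescaling the metric I may assume $K_{max}=1$, so that $\delta>1/4$ reads $1/4<K\le 1$ pointwise. Suppose $\gamma|_{[0,t_1]}$ is not injective, so that $\gamma(s)=\gamma(s')$ for some $0\le s<s'\le t_1$; since $\gamma$ is a geodesic the two branches meet \emph{transversally} at the common point $q$, and $\lambda:=\gamma|_{[s,s']}$ is a geodesic loop based at $q$. Taking an innermost such loop, $\lambda$ bounds an embedded disk $D$ lying in one (open) hemisphere, whose boundary is smooth except for a single corner at $q$ of interior angle $\beta\in(0,\pi)$. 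Gauss--Bonnet on $D$ (where the geodesic boundary contributes no geodesic curvature) gives $\int_D K\,dA=\pi+\beta$, hence $\pi<\int_D K\,dA<2\pi$: the loop encloses strictly more than a quarter of the total curvature $\int_S K\,dA=4\pi$ of the sphere. The goal is to show that a $1/4$-pinched metric cannot accommodate such a configuration.

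The pinching is fed in through two comparison facts, both consequences of Toponogov's theorem, which I would record first. From the upper bound $K\le 1$ the conjugate radius of $S$ is at least $\pi$, and the $1/4$-pinching forces the shortest closed geodesic to have length at least $2\pi$; by Klingenberg's estimate these combine to give $\mathrm{inj}(S)\ge\pi$. I would then observe that \emph{every} geodesic loop has length at least $2\,\mathrm{inj}(S)$: writing $m:=\lambda\!\left(\tfrac{s+s'}{2}\right)$ for the midpoint, the two half-arcs join $q$ to $m$ and both have length $\tfrac12\,\mathrm{len}(\lambda)$, so if this were smaller than $\mathrm{inj}(S)$ they would coincide with the unique minimizing geodesic from $q$ to $m$, forcing $\lambda$ to retrace itself and contradicting that its velocity never vanishes. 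Thus $\mathrm{len}(\lambda)\ge 2\pi$, and in the setting of a sphere of revolution I would control the geometry of $\lambda$ further using Clairaut's relation from Section~\ref{ss-clairaut}: the argument $u$ is monotone along $\gamma$ and the latitude increases then decreases between consecutive crossings of $e$, so $\gamma|_{[0,t_1]}$ is a single controlled ``hump'', which pins down how $D$ can sit inside the hemisphere.

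The contradiction is then meant to come from confronting the enclosed-curvature identity $\int_D K\,dA=\pi+\beta>\pi$ with the metric constraints $1/4<K\le 1$ and the length lower bound $\mathrm{len}(\lambda)\ge 2\pi$: on a $1/4$-pinched sphere the curvature is spread out tightly enough that a loop of this length cannot cut off a disk carrying more than a quarter of the total curvature. The main obstacle is precisely this last quantitative step, which must be run at the sharp value of the pinching, since the borderline configuration is the figure-eight geodesic, whose hump turns by exactly $2\pi$ around the axis and appears exactly at $\delta=1/4$; making the comparison close for every $\delta>1/4$ is where the optimality of the constant in Theorem~\ref{thm-revolution} is located. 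This is exactly the content extracted from Toponogov's theorem in \cite{ABHS}, which I would invoke to complete the estimate rather than reprove, concluding that no transverse self-intersection can occur and hence that $\gamma|_{[0,t_1]}$ is injective.
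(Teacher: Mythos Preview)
The paper does not prove this lemma at all: it is quoted verbatim as Lemma~3.9 of \cite{ABHS} and attributed to Toponogov's theorem, with no argument supplied. So there is no ``paper's own proof'' to compare against; the paper simply imports the result.

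Your proposal is not a self-contained proof either. You set up a plausible comparison-geometry framework (Gauss--Bonnet on the disk bounded by an innermost loop, Klingenberg's injectivity-radius estimate), but you explicitly identify the decisive step --- showing that a geodesic loop of length $\ge 2\pi$ cannot enclose total curvature exceeding~$\pi$ on a $1/4$-pinched sphere --- as ``the main obstacle'' and then defer it to \cite{ABHS}. At that point your argument and the paper's coincide: both cite the external source for the actual content. The preliminary estimates you assemble ($\pi<\int_D K\,dA<2\pi$, $\mathrm{len}(\lambda)\ge 2\pi$, $1/4<K\le 1$) do not by themselves produce a contradiction, and you do not indicate how Toponogov would close the gap.

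One local issue: your route to $\mathrm{inj}(S)\ge\pi$ passes through the claim that ``the $1/4$-pinching forces the shortest closed geodesic to have length at least $2\pi$'', which is unproved and, in fact, very close in strength to what you are trying to establish. This detour is unnecessary: Klingenberg's theorem for orientable even-dimensional manifolds with $0<K\le 1$ already gives $\mathrm{inj}(S)\ge\pi$ directly, without any lower curvature bound. You should invoke that version instead.
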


This lemma is valid for spheres that are not surfaces of revolution. The positiveness of the curvature defines an equator $e$. Observe that  the arc of geodesic $\gamma(t)$ for $t\in[0,t_1]$ is by definition contained in the closure of one of the hemispheres of $S$ and since it is injective, $\gamma(0)\neq \gamma(t_1)$. In particular, $\gamma$ cannot be a figure eight closed geodesic.
Furthermore, the distance along $e$ between $\gamma(0)$ and $\gamma(t_1)$ admits a strictly positive lower bound that is independent of $\gamma$. Thus $S$ has no asymptotic figure eight geodesic. Proposition~\ref{prop:figure-eight} implies that the geodesic flow of $S$ is left-handed.


\subsection{Left-handedness for ellipsoids of revolution}\label{sec: ellipsoid}

In this section we consider the ellipsoid $\mE_b$ defined by the equations in $\mR^3$
$$x^2+y^2+\frac{z^2}{b^2}=1,$$
with $b\geq 1$ and the $z$-axis is the rotation axis. We prove in this section that its geodesic flow is left-handed if $b<2$:

\newtheorem*{prop:ellipsoid}{\bf Proposition \ref{prop:ellipsoid}}
\begin{prop:ellipsoid}
Let $\mathbb{E}_b$ be the ellipsoid of revolution with axes $(1,1,b)$. 
The geodesic flow of $\UT\mathbb{E}_b$ is left-handed if and only if $b<2$.
\end{prop:ellipsoid}

\begin{proof}

A parametrisation of $\mE_b$ as a surface of revolution, see Section~\ref{ss-clairaut}, is given by
\[
\begin{array}{rcl}
x(u,v) & = & \cos v \cos u\\
y(u,v) & = & \cos v \sin u\\
z(u,v) & = & b \sin v, 
\end{array}
\]
with $u\in[0,2\pi)$ and $v\in [-\pi/2,\pi/2]$.
Thus its curvature is 
$$K(u,v)=\frac{b^2}{(\sin^2v+b^2\cos^2v)^2}.$$
If $v=0$, that is along the equator, we have that the curvature is equal to $1/b^2$ and is the minimum curvature if $b> 1$, the maximum curvature if $b<1$. The other extremal value of the curvature is attained at the poles, that is when $v=\pm \pi/2$ and is equal to $b^2$. Hence $\delta(\mE_b)=1/b^4$.

\medskip

Consider an arc~$\gamma$ of geodesic that meets the equator at $(x,y,z)=(1,0,0)$ of longitude $u=0$. We now compute the coordinates of the first point of intersection of $\gamma$  with the equator. Let~$\theta_0$ be the angle between the equator $e_+$ and $\gamma$. By symmetry, we assume $0<\theta_0\leq \pi/2$.  Let $\alpha_0=\pi/2-\theta_0$, the angle between the meridian and $\gamma$.
For simplicity we set $\sigma_0=\sin\alpha_0$. 
For a point on~$\gamma$, we recall that $u$ is its longitude and $f$ is its radius (distance to the $z$-axis). 
We also call $c$ the distance along the ellipsoid to the equator and~$\alpha$ the angle between~$\gamma$ and the meridian towards the north pole, see Figure~\ref{F:Coor}. 
We will also use the $z$ coordinate of a point in $\gamma$.

\begin{figure}
\begin{picture}(130,250)(0,0)
\put(0,0){\includegraphics[width=.3\textwidth]{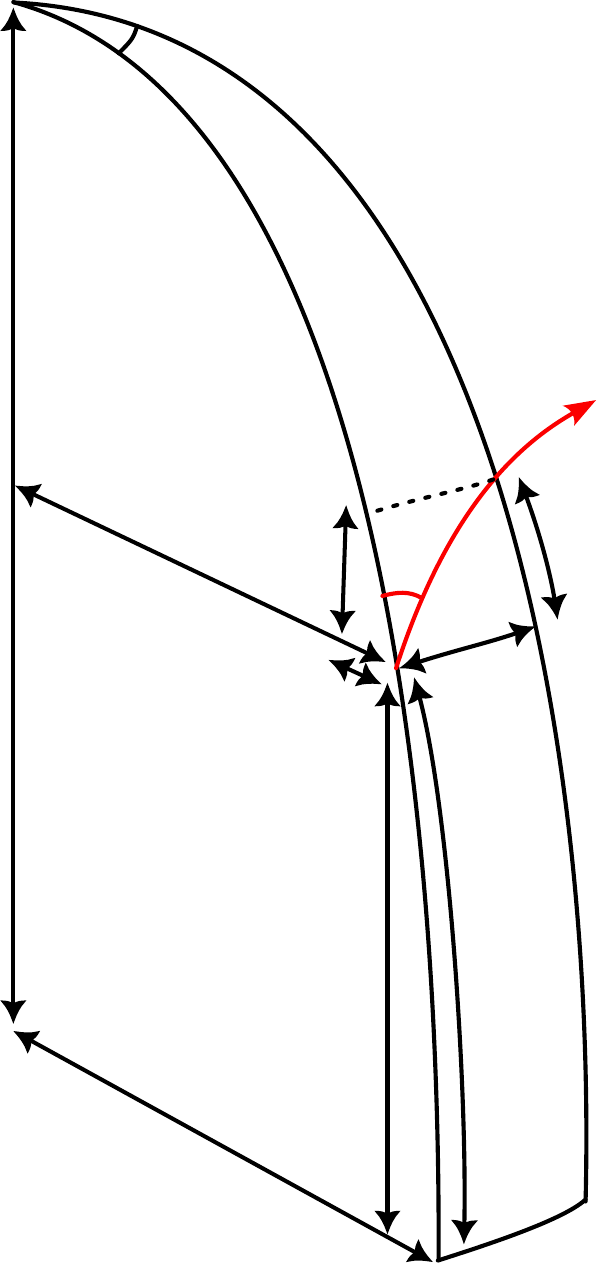}}
\put(-5,150){$b$}
\put(35,114){$f$}
\put(60,50){$h$}
\put(85,50){$c$}
\put(70,123){$\alpha$}
\put(59,96){$df$}
\put(50,125){$dh$}
\put(101,125){$dc$}
\put(8,214){$du$}
\put(78,101){$f\,du$}
\end{picture}
\caption{An arc of geodesic (red).}
\label{F:Coor}
\end{figure}


By Clairaut's relation the quantity~$f\sin(\alpha)$ is constant along~$\gamma$. 
In particular the radius $f_{max}$ of the highest point on~$\gamma$ corresponds to~$\alpha=\frac\pi2$, hence $f_{max}=\sigma_0$. 
Note that $\gamma$ starts from the equator, goes north for some time during which $f$ decreases from 1 to $f_{max}$. 
Let $u_{max}$ be the longitude of the highest point. 
By symmetry, the first-return longitude coordinate on the equator is then twice~$u_{max}$.

To compute $u_{max}$ we will compute the value of~$\ds\frac{du}{dr}$ and integrate it from $1$ to~$f_{max}$.
Looking at Figure~\ref{F:Coor}, we start with the relation
$\frac{f\,du}{dc}=\tan(\alpha)$.
Clairaut's relation yields $\sin(\alpha) = \frac{\sigma_0}f$, so we get
\begin{equation}\label{eq1}
 \frac{f^2\,du^2}{dc^2}=\tan^2(\alpha)=\frac{\sin^2(\alpha)}{1-\sin^2(\alpha)}=\frac{\frac{\sigma_0^2}{f^2}}{1-\frac{\sigma_0^2}{f^2}}=\frac{\sigma_0^2}{f^2-\sigma_0^2}.
 \end{equation}
Since we are on $\mE_b$ we have $\ds f^2+\frac{z^2}{b^2}=1$, so that $\frac{z^2}{b^2}=1-f^2$. 
Differentiating we get $\frac{z}{b^2}dz=-f\,df$, and so 
\[dz^2=\frac{b^4f^2}{z^2}df^2=\frac{b^2f^2}{1-f^2}df^2.\]
By Pythagoras' theorem we have 
\[dc^2=dz^2+df^2=\left(1+\frac{b^2f^2}{1-f^2}\right)df^2=\frac{1+(b^2-1)f^2}{1-f^2}df^2\]
Putting it back into~\eqref{eq1} we get
\[\frac{du^2}{df^2}=\frac{(1+(b^2-1)f^2)\sigma_0^2}{f^2(1-f^2)(f^2-\sigma_0^2)}.\]
Integrating this we obtain
\begin{equation}\label{eq2}
u_{max}=\int_{\sigma_0}^1\sqrt{\frac{(1+(b^2-1)f^2)\sigma_0^2}{f^2(1-f^2)(f^2-\sigma_0^2)}}df=\sigma_0\int_{\sigma_0}^1\frac{1}f\sqrt{\frac{(1+(b^2-1)f^2)}{(1-f^2)(f^2-\sigma_0^2)}}df.
\end{equation}
The integrand diverges at~$\sigma_0$ and~$1$, but it is integrable at both points. 


Set $a=b^2-1$. 
We perform the change of variable $v^2=\frac{(f^2-\sigma_0^2)a}{1-\sigma_0^2}$, then
$$
u_{max}  =  \int_0^{\sqrt{a}} a\sigma_0 \sqrt{\frac{1+(1-\sigma_0^2)v^2+a\sigma_0^2}{(a-v^2)((1-\sigma_0^2)v^2+a\sigma_0^2)^2}}du.
$$
We make a second change of variable $v=\sqrt{a}\cos t$ to obtain
$$
u_{max}  =  \int_0^{\pi/2} a\sigma_0\frac{\sqrt{1+a\sigma_0^2+(1-\sigma_0^2)a\cos^2 t}}{a\sigma_0^2+(1-\sigma_0^2)a\cos^2 t}dt.
$$
We can now compute the limit when $\sigma_0$ tends to one, that, by the dominated convergence theorem, is equal to
\begin{eqnarray*}
\int_0^{\pi/2} \left(\lim_{\sigma_0\to 1}a\sigma_0\frac{\sqrt{1+a\sigma_0^2+(1-\sigma_0^2)a\cos^2 t}}{a\sigma_0^2+(1-\sigma_0^2)a\cos^2 t}\right)dt & = & \int_0^{\pi/2}\sqrt{1+a}dt\\
&= & \frac{\pi}{2}b.
\end{eqnarray*}

Therefore, $u_{max}$ is bounded away from $\pi$ if and only if $b<2$, so that $\mE_b$ has no asymptotic figure-eight geodesic if and only if $b<2$. 
\end{proof}

\subsection{Proof of the second part of Theorem~\ref{thm-revolution}}\label{ss-example}

Fix $\delta$ in $(0,1]$. 
We now explain how to construct a sphere of revolution $S_\delta$ whose curvature is $\delta$-pinched.
Roughly speaking it is obtained by flattening a round sphere near the equator into a piece of an ellipsoid of revolution with the desired curvature. 
The construction works for every $\delta\le 1$, but it is interesting only for $\delta<1/4$ in which case one finds two geodesics who link positively and for $\delta=1/4$ in which case the self-linking of the equator vanishes.

As in Section~\ref{ss-clairaut} we describe $S_\delta$ with a radius function $f$, and an auxiliary height function $g$. 
The construction is depicted on Figure~\ref{F:Huit}. 
For convenience we will use an extra function $b$ that will parametrize the curvature. 
Fix $\e>0$ small enough. 
For $s\in[-\pi,\pi]$ we define 
\[f(s)=b(s)\cos(s), \quad \mathrm{and}\quad g(s)=\sin(s), \]
where $b$ is a $C^\infty$ bump function that satisfies
\begin{equation*}
b(s)=
    \begin{cases}
        \frac1{\sqrt\delta}  & \text{if } s \in [-\e,\e]\\
        1 & \text{if } s \in [-1,-2\e]\cup[2\e,1],\\
    \end{cases}
\end{equation*}
and $b$ interpolates smoothly between $\frac1{\sqrt\delta}$, and $1$ on $[-2\e,-\e]\cup[\e,2\e]$. 
Denote by~$S_{\delta}$ the sphere of revolution induced by the pair~$(f,g)$. 
It is made by connecting an annular neighborhood of the equator of~$\mE_{\frac1{\sqrt\delta}}$ with polar caps cut from the round sphere~$\mE_1$. 
Notice that, at a point of $S_\delta$ corresponding to a parameter~$s$, the sphere is osculated by an ellipsoid of the form~$\mE_{b(s)}$, so that its curvature is everywhere between~{$\delta$} and~$1$, and so it is $\delta$-pinched. 

We now explain why the geodesic on~$S_\delta$ is not left-handed for $\delta<1/4$. 
Indeed, in this case one has $b(s)>2$ for~$s\in[-\e,\e]$. 
By {Equation~\eqref{eq2}}, there exists a figure-eight closed geodesic~$\gamma$ in the $\e$-neighborhood of the equator on~$S_\delta$. 
By {Proposition~\ref{prop-linke}}, the oriented lift of~$\gamma$ in~$\UT S_\delta$ has zero linking with the lift of the equator. 
Therefore the geodesic flow on~$S_\delta$ is not left-handed. 

\begin{figure}
\includegraphics[width=.5\textwidth]{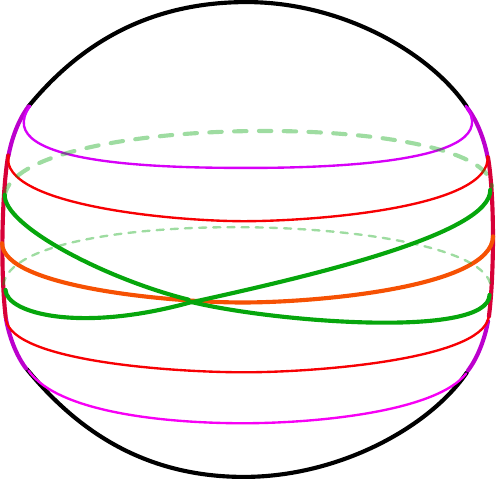}
\caption{The surface~$S_\delta$ and the geodesic figure-eight (green) whose lift has zero linking with the equator (orange).}
\label{F:Huit}
\end{figure}

Finally, we consider the case $\delta=1/4$. 
The sphere $S_{1/4}$ has a neighborhood of its equator which is a neighborhood of the equator of~$\mE_{1/2}$. 
The computation at the end of the proof of Proposition~\ref{prop:ellipsoid} ensures that there is an asymptotic figure-eight geodesic in this neighborhood, hence the geodesic flow is not left-handed. 

This concludes the proof of Theorem~\ref{thm-revolution}.

\end{document}